\documentclass[12pt]{amsart}
\usepackage{xspace,amssymb,amsfonts,epsfig,marvosym,euscript,eufrak,xypic,enumerate,amsmath,nicefrac}
\usepackage{multicol}
\usepackage[margin=3cm,footskip=25pt,headheight=20pt]{geometry}


\newcommand{\nc}{\newcommand}
\newcommand{\mc}{\mathcal}

\nc{\on}{\operatorname}
\nc{\h}{\mathfrak{h}}
\nc{\g}{\mathfrak{g}}
\nc{\n}{\mathfrak{n}}
\nc{\ch}{\on{CH}}
\nc{\wt}{\widetilde}

\nc{\F}{\mc{F}}
\nc{\C}{\mc{C}}
\nc{\M}{\on{M}}
\nc{\T}{\mc{T}}
\renewcommand{\H}{\on{H}}
\nc{\G}{\mc{G}}
\nc{\ov}{\overline}

\nc{\VFun}{\on{Vect}(\fun)}

\nc{\FF}{\mathbb{F}}
\nc{\HH}{\mathbb{H}}

\nc{\Amod}{$A$-{mod}}

\renewcommand{\H}{\mathbb{H}(\Pone)}

\nc{\Pone}{\mathbb{P}^1}
\nc{\Aone}{\mathbb{A}^1}
\nc{\fun}{\mathbb{F}_1}
\nc{\mf}{\mathfrak}
\nc{\slthat}{\widehat{\mf{sl}}_2}
\renewcommand{\a}{\mathfrak{a}}
\nc{\p}{\mathfrak{p}}
\nc{\spec}{\on{Spec}}
\nc{\Msch}{\mc{M}sch}

\theoremstyle{definition}

\newtheorem{theorem}{Theorem}
\newtheorem{lemma}{Lemma}
\newtheorem{definition}{Definition}
\newtheorem{corollary}{Corollary}
\newtheorem{remark}{Remark}

\theoremstyle{definition}
\newtheorem{example}{Example}

\begin{document}

\title{On the Hall algebra of coherent \\ sheaves on $\mathbb{P}^1$ over $\fun$.}
\author{Matt Szczesny} \thanks{The author is supported by an NSA grant}
\address{Department of Mathematics  and Statistics, 
         Boston University, Boston MA, USA}
\email{szczesny@math.bu.edu}

\begin{abstract}
We define and study the category $Coh_n(\Pone)$ of normal coherent sheaves on the monoid scheme $\Pone$ (equivalently, the $\mathfrak{M}_0$-scheme $\Pone / \fun$ in the sense of Connes-Consani-Marcolli \cite{CCM} ). This category resembles in most ways a finitary abelian category,  but is not additive. As an application, we define and study the Hall algebra of $Coh_n(\Pone)$. We show that it is isomorphic as a Hopf algebra to the enveloping algebra of the product of a non-standard Borel in the loop algebra $L {\mathfrak{gl}}_2$ and an abelian Lie algebra on infinitely many generators. This should be viewed as a $(q=1)$ version of Kapranov's result relating (a certain subalgebra of ) the Ringel-Hall algebra of $\mathbb{P}^1$ over $\mathbb{F}_q$ to a non-standard quantum Borel inside the quantum loop algebra $\mathbb{U}_{\nu} (\slthat)$, where $\nu^2=q$.
\end{abstract}

\maketitle

\section{introduction}

If $\mc{A}$ is an abelian category defined over a finite field $\FF_q$, and \emph{finitary} in the sense that $\on{Hom}(M,N)$ and $\on{Ext}^{1}(M,N)$ are finite-dimensional $\forall \; M,N \in \mc{A}$, one can attach to it an associative algebra $\HH(\mc{A})$ defined over the field $\mathbb{Q}(\nu), \; \nu=\sqrt{q}$, called the Ringel-Hall algebra of $\mc{A}$. As a $\mathbb{Q}(\nu)$--vector space, $\HH(\mc{A})$ is spanned by the isomorphism classes of objects in $\mc{A}$, and its structure constants are expressed in terms of the number of extensions between objects.  Under additional assumptions on $\mc{A}$, it can be given the structure of a Hopf algebra (see \cite{S}). 

Let $X$ be a smooth projective curve over  $\FF_q$. It is known that the abelian category $Coh(X)$ of coherent sheaves on $X$ is finitary, and one can therefore consider its Ringel-Hall algebra $\HH(X)$. This algebra was studied by Kapranov in the important paper \cite{Kap2} (see also \cite{BK}), in the context of automorphic forms over the function field $\FF_q (X)$. 
Let $L \mathfrak{sl}_2 := \mathfrak{sl}_2 \otimes \mathbb{C}[t,t^{-1}]$  be the loop algebra of $\mathfrak{sl}_2$, and $\mathbb{U}_{\nu}(L \mathfrak{sl}_2)$ the corresponding quantum loop algebra (see \cite{S}) . Denote by $ L \mathfrak{sl}^+_2$ the "positive" subalgebra  spanned by $e \otimes t^k$ and $h \otimes t^l$, $k \in \mathbb{Z}, l \in \mathbb{N}$, and let $\mathbb{U}_{\nu} ( L \mathfrak{sl}^+_2 )$ be the corresponding deformation of the enveloping algebra $\mathbb{U}(L \mathfrak{sl}^+_2)$ inside $\mathbb{U}_{\nu}(L \mathfrak{sl}_2)$.  In the case $X=\Pone$, Kapranov shows in \cite{Kap2} that there exists an embedding of bialgebras
\[
\Psi: \mathbb{U}_{\nu} (L \mathfrak{sl}^+_2) \rightarrow \HH(\Pone).
\] 

In this paper, we define and study a version of the category of coherent sheaves on the monoid scheme $\Pone$. The theory of monoid schemes is one of several models of the theory of schemes over $\fun$ - the field with one element, and was developed by Kato \cite{Kato}, Deitmar \cite{D}, Connes-Consani-Marcolli \cite{CCM}. A monoid scheme $X$ is defined as a topological space locally modeled on the spectrum of a commutative unital monoid with $0$, carrying a structure sheaf of commutative monoids $\mc{O}_X$. We define coherent sheaves on $X$ as sheaves of pointed sets carrying an action of $\mc{O}_X$ satisfying a "normality" condition, which are locally finitely generated in a suitable sense. This is a modification of the notion of coherent sheaf defined in \cite{D}. We will denote the corresponding category by $Coh_n(X_{\fun})$.  In this framework, one can make sense of most of the usual notions of algebraic geometry, such as locally free sheaves, line bundles, and torsion sheaves. The category $Coh_n(X_{\fun})$ behaves very much like a finitary abelian category except in that  $\on{Hom}(\mc{F}, \mc{G})$ only has the structure of a pointed set (which corresponds to the notion of $\fun$--vector space). 
We show that as in the case of $\Pone$ over a field, every locally free sheaf is a direct sum of line bundles, and that every coherent sheaf is a direct sum of a torsion sheaf and a torsion-free sheaf. However, over $\fun$, in addition to locally free sheaves,  there is a class of torsion-free sheaves  which we call \emph{cyclic sheaves}.   As an application, we define the Hall algebra $\HH(\Pone_{\fun})$ of $Coh_n(\Pone_{\fun})$, and describe its structure. Letting $L \mathfrak{gl}^+_2 = (\mathfrak{d} \otimes t \mathbb{C}[t]) \oplus (e \otimes \mathbb{C}[t,t^{-1}])$, where $$\mathfrak{d} := \on{span} \left\{ h_1= \left[ \begin{matrix} 1 & 0 \\ 0 & 0 \end{matrix}\right], h_2=\left[ \begin{matrix} 0 & 0 \\ 0 & 1 \end{matrix}\right] \right\} \textrm{ and } e = \left[ \begin{matrix} 0 & 1 \\ 0 & 0 \end{matrix}\right],$$ and letting $\kappa$ denote the abelian Lie algebra on generators $\{ \kappa_n \}_{n \in \mathbb{N}}$, the main result is the following:

\begin{theorem} \label{mainthm}
\[
\HH(\Pone_{\fun}) \simeq  \mathbb{U} (L \mathfrak{gl}^+_2 \oplus \kappa).
\]
\end{theorem}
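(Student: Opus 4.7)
The plan is to first establish that $\HH(\Pone_{\fun})$ is a cocommutative graded-connected Hopf algebra, then apply a Milnor--Moore type theorem to identify it with $\mathbb{U}(\mathfrak{g})$ for $\mathfrak{g}$ the Lie algebra of primitive elements, and finally exhibit an explicit basis of $\mathfrak{g}$ matching $L\mathfrak{gl}^+_2 \oplus \kappa$. Cocommutativity of the coproduct is expected at $q=1$ because the Hall structure constants are not twisted by any quantum factor and the counting of short exact sequences becomes symmetric in sub and quotient. The natural grading by class in $K_0$ (rank and degree) provides connectedness, so Milnor--Moore should apply directly.

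By the structural results established earlier in the paper, every $\F \in Coh_n(\Pone_{\fun})$ decomposes uniquely as $\F_{\on{tor}} \oplus \F_{\on{tf}}$, with $\F_{\on{tf}}$ a direct sum of line bundles $\mc{O}(n)$, $n \in \mathbb{Z}$, and indecomposable cyclic sheaves, while $\F_{\on{tor}}$ is supported on the two closed points $\{0, \infty\}$ of $\Pone_{\fun}$. I would identify Lie generators as follows: $[\mc{O}(n)] \leftrightarrow e \otimes t^n$ for $n \in \mathbb{Z}$; the class of the indecomposable length-$k$ torsion sheaf supported at $0$ (resp.\ $\infty$) with $h_1 \otimes t^k$ (resp.\ $h_2 \otimes t^k$) for $k \geq 1$; and the class of the indecomposable cyclic sheaf of degree $n$ with $\kappa_n$. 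Checking the Lie relations then reduces to explicit extension computations: extensions between two line bundles are split in at least one direction, giving $[[\mc{O}(n)], [\mc{O}(m)]]=0$, which matches $[e \otimes t^n, e \otimes t^m]=0$; extending $\mc{O}(n)$ by a length-$k$ torsion sheaf at $0$ yields $\mc{O}(n+k)$, producing $[h_1 \otimes t^k, e \otimes t^n] = e \otimes t^{n+k}$; the analogous computation at $\infty$ yields the opposite sign, matching $[h_2,e]=-e$; torsion sheaves at distinct closed points have no mutual non-split extensions and commute; and cyclic sheaves should have no non-split extensions with any other class, placing them in the center.

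The hardest step, I expect, is twofold. First, one must verify the subtle asymmetry between $h_1$ and $h_2$ and explain why only the positive part $\mathfrak{d} \otimes t\mathbb{C}[t]$ appears rather than the full $\mathfrak{d} \otimes \mathbb{C}[t,t^{-1}]$; this ties to the positivity of torsion lengths, but in Kapranov's classical theorem over $\FF_q$ the two closed points collapse symmetrically into a single Cartan $h=h_1-h_2$, whereas here they remain independent generators, and tracing this splitting to the geometric distinction between the two points of $\Pone_{\fun}$ is delicate. Second, cyclic sheaves are the novel feature of the $\fun$ setting and have no analogue in the $q \neq 1$ theorem; one must compute their Hall products with line bundles, torsion, and each other, and confirm that they generate a free abelian central subalgebra with no hidden relations mixing them into the $L\mathfrak{gl}^+_2$ part. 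Once both points are resolved, the isomorphism follows by matching the PBW basis of $\mathbb{U}(L\mathfrak{gl}^+_2 \oplus \kappa)$ with the isomorphism-class basis of $\HH(\Pone_{\fun})$ supplied by the structural decomposition above.
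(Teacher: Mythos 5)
Your overall strategy coincides with the paper's: grade $\HH(\Pone_{\fun})$ by the effective cone in $K_0$, show it is a graded connected cocommutative Hopf algebra, invoke Milnor--Moore to write it as $\mathbb{U}(\mathfrak{q})$ with $\mathfrak{q}$ spanned by the delta-functions on the indecomposables $\mc{O}(n)$, $\T_{0,k}$, $\T_{\infty,l}$, $\mc{C}_m$, and then match generators and commutators with $L\mathfrak{gl}^+_2\oplus\kappa$. Two of your steps are wrong as stated, however. First, your justification of cocommutativity: the coproduct that makes this work is $\Delta(f)(\F,\F')=f(\F\oplus\F')$, which only sees split extensions and is cocommutative simply because $\oplus$ is symmetric. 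The counting of short exact sequences is \emph{not} symmetric in sub and quotient, even at $q=1$: $\mc{O}(n+k)$ contains a subsheaf isomorphic to $\mc{O}(n)$ with torsion quotient, but contains no torsion subsheaf at all. So the ``dual of the product'' coproduct is not cocommutative here; you need the split coproduct, for which the primitives are exactly the span of the indecomposable classes --- which is what the rest of your argument requires.

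Second, and more seriously, the sign at $\infty$. Hall structure constants are non-negative counts of subobjects, and the situation at $\infty$ is carried to the situation at $0$ by the automorphism $t\mapsto t^{-1}$ of $\Pone$; consequently $[\ov{\T_{\infty,k}},\ov{\mc{O}(n)}]=+\ov{\mc{O}(n+k)}$, exactly the same sign as at $0$. No minus sign comes out of the extension computation, contrary to your claim that ``the analogous computation at $\infty$ yields the opposite sign.'' With your literal assignment $h_2\otimes t^k\mapsto\ov{\T_{\infty,k}}$ the map fails to be a Lie algebra homomorphism, since $[h_2,e]=-e$ while $[\ov{\T_{\infty,k}},\ov{\mc{O}(n)}]=+\ov{\mc{O}(n+k)}$. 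The repair is the one the paper makes: send $h_2\otimes t^k$ to $-\ov{\T_{\infty,k}}$; the classes $\ov{\T_{0,k}}$ and $-\ov{\T_{\infty,k}}$ still form a basis of the degree-$k$ torsion part of $\mathfrak{q}$, so the corrected map is bijective on the indecomposable basis. Once that sign is inserted, the remaining checks you list (line bundles commute because every extension between them splits, torsion at distinct points commutes, and the $\ov{\mc{C}_n}$ are central with no relations into the rest) are correct and complete the proof.
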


This result should be naturally viewed as the $q=1$ version of Kapranov's theorem. As seen in this paper, the category $Coh_n(X_{\fun})$ is in many ways much simpler than $Coh(X_{\fun} \otimes \FF_q)$, where $X_{\fun} \otimes \FF_q$ denotes the base-change of the scheme $X_{\fun}$ to $\FF_q$. For instance, $\Pone$ over $\fun$ possesses three points - two closed points $\{ 0, \infty \}$, and a generic point $\eta$. The category $Coh(X_{\fun})$ is thus essentially combinatorial in nature. 
This suggests a possible application of the ideas of algebraic geometry over $\fun$ to the study of Hall algebras of higher-dimensional varieties.  The problem of studying the Hall algebra of $Coh(\mathbb{P}^k_{\FF_q})$ for $k > 1$ seems somewhat difficult, while that of $Coh(\mathbb{P}^k_{\fun})$ may be much more manageable. The latter should be viewed as a degenerate (i.e. $q=1$) version of the original object, and would give some hints as to its structure. 

The paper is structured as follows. In section \ref{msch} we recall basic notions related to monoid schemes and coherent sheaves on them. Section 
\ref{coh_a1} classifies normal coherent sheaves on $\Aone_{\fun}$. In section \ref{coh_p1}, we study the category $Coh_n(\Pone_{\fun})$ and establish basic structural results. In section \ref{hall_alg} we introduce the Hall algebra $\HH(\Pone_{\fun})$ of $Coh_n(\Pone_{\fun})$. Finally, in section \ref{computation} we describe the structure of $\HH(\Pone_{\fun})$ and prove Theorem \ref{mainthm}. 

\medskip

\begin{remark}
In the remainder of this paper, unless otherwise indicated, all schemes will be monoid schemes (i.e. over $\fun$), and we will omit the subscript $\fun$, which was used in the introduction for emphasis and contrast.  Thus instead of writing $X_{\fun}, \Pone_{\fun}, \Aone_{\fun}, Coh_n(X_{\fun})$, etc. we will write simply $X, \Pone, \Aone, Coh_n(X) $ etc.
\end{remark}

\medskip

\noindent{\bf Acknowledgements:} In writing this paper, the author benefited greatly from the presentation of the theory of monoid schemes found in \cite{CHWW}, and thanks C. Weibel for kindly sharing this preprint with him. He would also like to thank Oliver Lorscheid for many valuable conversations.

\section{Monoid schemes} \label{msch}

In this section, we briefly recall the notion of a monoid scheme following \cite{CHWW}. This is essentially equivalent to the notion of $\mathfrak{M}_0$-scheme in the sense of \cite{CCM}  For other approaches to schemes over $\fun$, see \cite{CC1,CC2,CCM,Du,LPL, Sou,TV}.

Recall that ordinary schemes are ringed spaces locally modeled on affine schemes, which are spectra of commutative rings.  A monoid scheme is locally modeled on an affine monoid scheme, which is the spectrum of a commutative unital  monoid with $0$. In the following, we will denote monoid multiplication by juxtaposition or $\cdot$. In greater detail:

A \emph{monoid} $A$ will be a commutative associative monoid with identity $1_A$ and zero $0_A$ (i.e. the absorbing element). We require
\[
 1_A \cdot a = a \cdot 1_A = a \hspace{1cm} 0_A \cdot a = a \cdot 0_A = 0_A \hspace{1cm} \forall a \in A
\]
Maps of monoids are required to respect the multiplication as well as the special elements $1_A, 0_A$. An \emph{ideal} of $A$ is a subset $\a \subset A$ such that $\a \cdot A \subset \a$. An ideal $\p \subset A$ is \emph{prime} if $xy \in \p$ implies either $x \in \p$ or $y \in \p$. 

Given a monoid $A$, the topological space $\spec A$ is defined to be the set $$\spec A := \{ \p | \p \subset A \textrm{ is a prime ideal } \}, $$ with the closed sets of the form $$ V(\a) := \{ \p | \a \subset \p, \p \textrm{ prime } \},  $$ together with the empty set. 
Given a multiplicatively closed subset $S \subset A$, the \emph{localization of $A$ by $S$}, denoted $S^{-1}A$, is defined to be the monoid consisting of symbols $\{ \frac{a}{s} |  a \in A, s \in S \}$, with the equivalence relation $$\frac{a}{s} = \frac{a'}{s'}  \iff \exists \; s'' \in S \textrm{ such that } as's'' = a's s'', $$ and multiplication is given by $\frac{a}{s} \times \frac{a'}{s'} = \frac{aa'}{ss'} $. 

For $f \in A$, let $S_f$ denote the multiplicatively closed subset $\{ 1, f, f^2, f^3, \cdots,  \}$. We denote by $A_f$ the localization $S^{-1}_f A$, and by $D(f)$ the open set $\spec A \backslash V(f) \simeq \spec A_f$, where $V(f) := \{ \p \in \spec A | f \in \p \}$. The open sets $D(f)$ cover $\spec A$. 
$\spec A$ is equipped with a  \emph{structure sheaf} of monoids $\mc{O}_A$, satisfying the property $\Gamma(D(f), \mc{O}_A) = A_f$.    Its stalk at $\p \in \spec A$ is $A_{\p} := S^{-1}_{\p} A$, where $S_{\p} = A \backslash \p$. 

A unital homomorphism of monoids $\phi: A \rightarrow B$ is \emph{local} if $\phi^{-1}(B^{\times}) \subset A^{\times}$, where $A^{\times}$ (resp. $B^{\times}$) denotes the invertible elements in $A$ (resp. $B$). 
A \emph{monoidal space} is a pair $(X, \mc{O}_X)$ where $X$ is a topological space and $\mc{O}_X$ is a sheaf of monoids. A \emph{morphism of monoidal spaces} is a pair $(f, f^{\#})$ where $f: X \rightarrow Y$ is a continuous map, and $f^{\#}: \mc{O}_Y \rightarrow f_* \mc{O}_X$ is a morphism of sheaves of monoids, such that the induced morphism on stalks $f^{\#}_\p : \mc{O}_{Y, f(\p)} \rightarrow f_* \mc{O}_{X, \p}$ is local. 
An \emph{affine monoidal scheme} is a monoidal space isomorphic to $(\spec A, \mc{O}_A)$. 
Thus, the category of affine monoidal schemes is opposite to the category of monoids. 
A monoidal space $(X,\mc{O}_X)$ is called a \emph{monoidal scheme}, if for every point $x \in X$ there is an open neighborhood $U_x \subset X$ containing $x$ such that $(U_x, \mc{O}_X \vert_{U_x})$ is an affine monoidal scheme. We denote by $\Msch$ the category of monoid schemes.

\begin{example} \label{P1}

Denote by $\langle t \rangle$ the free commutative unital monoid with zero generated by $t$, i.e.
\[
\langle t \rangle := \{ 0, 1, t, t^2, t^3, \cdots, t^n, \cdots \},
\]
and let $\mathbb{A}^1 := \on{Spec} \, \langle t \rangle $ - the monoidal affine line.
Let $\langle t, t^{-1} \rangle$ denote the monoid 
\[
\langle t,t^{-1} \rangle := \{ \cdots, t^{-2}, t^{-1}, 1, 0, t, t^2, t^3, \cdots \}.
\]
We obtain the following diagram of inclusions
\begin{equation*}
\langle t \rangle \hookrightarrow \langle t,t^{-1} \rangle \hookleftarrow \langle t^{-1} \rangle.
\end{equation*}
Taking spectra, and denoting by $U_0 = \on{Spec}\, \langle t \rangle , U_{\infty} = \on{Spec}\, \langle t^{-1} \rangle $, we obtain
the diagram
\begin{equation*} 
\mathbb{A}^1 \simeq U_0 \hookleftarrow U_0 \cap U_{\infty} \hookrightarrow U_{\infty} \simeq \mathbb{A}^{1}.
\end{equation*}
We define $\mathbb{P}^{1}$, the monoid projective line, to be the
monoid scheme obtained by gluing two copies of $\mathbb{A}^1$
according to this diagram. 
It has three points - two closed points $0 \in U_0, \;  \infty \in U_{\infty}$, and the generic point $\eta$. Denote by $\iota_{0} : U_0 \hookrightarrow \Pone$, $\iota_{\infty}: U_{\infty} \hookrightarrow \Pone$  the corresponding inclusions. 

\end{example}

\medskip

Given a commutative ring $R$, there exists a base-change functor
$\otimes R$ from $\Msch$ to $\on{Sch} / \on{Spec} R.$
On affine schemes, $\otimes R$ is defined by setting
$
\otimes R (\on{Spec} A) = \on{Spec} R[A], 
$ 
where $R[A]$ is the monoid algebra:
\[
R[A] := \left\{ \sum r_i a_i | a_i \in A, a_i \neq 0, r_i \in R \right\}
\]
with multiplication induced from the monoid multiplication. 
For a general monoid scheme $X$, $\otimes R$ is defined by gluing the open affine subfunctors of $X$. We denote $\otimes R (X)$ by $X \otimes R$.

\subsection{Coherent sheaves} \label{coh_sheaves}

\subsubsection{Vector spaces over $\fun$}

In this section we recall the category of vector spaces over $\fun$ following \cite{KapS, H}. 
\begin{definition}
The category $\VFun$ of vector spaces over $\fun$ is defined as follows. 
\begin{eqnarray*}
\on{Ob}(\VFun) & := & \{\textrm{ pointed sets } (V, *_V ) \} \\
\on{Hom}(V,W) & := & \{ \textrm{ maps } f: V \rightarrow W \vert \;
f(*_V) = *_W \\ & & \text{and }f \vert_{V \backslash f^{-1}(*_W)} \textrm{ is an injection }\}
\end{eqnarray*}
Composition of morphisms is defined as the composition of maps of
sets, and so is associative. We refer to the unique morphism $f \in
\on{Hom}(V,W)$ such that $f(V)= *_W$ as the zero map. 
If $V \in \VFun$ is a finite set, we define the \emph{dimension} of $V \in \VFun$ as $\dim(V) := |V| := \#V -1$ (i.e. we do not count the basepoint). 
\end{definition}

Now $\on{Hom}(V,W)$ is a pointed set, with distinguished element the zero
map. Thus \\ \mbox{$\on{Hom}(V,W) \in \VFun$.} For a fixed $V \in
\VFun$, $\on{End}(V) := \on{Hom}(V,V) $ has the structure of a
(generally) non-commutative monoid 
with $0$ (the zero map) and $1$ (the identity map).

\subsubsection{$A$--modules} \label{Amod}

Let $A$ be a monoid. An \emph{$A$--module} is a pointed set $(M,*_{M})$ together with an action 
\begin{align*}
\mu: A \times M & \rightarrow M \\
(a,m) & \rightarrow a\cdot m
\end{align*}
which is compatible with the monoid multiplication (i.e. $1_A \cdot m = m$, $a \cdot (b \cdot m) = (a \cdot b) \cdot m$), and $0_A \cdot m = *_{M} \; \forall m \in M$). 
We will refer to elements of $M \backslash *_M$ as \emph{nonzero} elements. 

A \emph{morphism of $A$--modules} $f: (M,*_M) \rightarrow (N,*_N)$ is a map of pointed sets (i.e. we require $f(*_M) = *_M$) compatible with the action of $A$, i.e. $f(a \cdot m) = a \cdot f(m)$.

A pointed subset $(M',*_M) \subset (M,*_M)$ is called an \emph{$A$--submodule} if $A \cdot M' \subset M'$. In this case we may form the quotient module $M/M'$, where $M/M' := M \backslash (M' \backslash *_M)$, $*_{M/M'} = *_M$, and the action of $A$ is defined by setting 
$$a \cdot \overline{m} = \left\{ \begin{array}{ll} \overline{a \cdot
m} & \textrm{if } a \cdot m \notin M' \\ *_{M/M'} & \textrm{ if }
a\cdot m \in M'    \end{array} \right. $$  
where $\overline{m}$ denotes $m$ viewed as an element of $M/M'$. 
If $M$ is finite, we define $|M| = \# M -1 $, i.e. the number of non-zero elements. 

\bigskip

Denote by $\Amod$ the category of $A$--modules. It has the following properties:
\begin{enumerate}
\item $\Amod$ has a zero object $\emptyset$, namely the one-element
pointed set $\{ * \}$. 
\item A morphism $f: (M,*_M) \rightarrow (N,*_N)$ has a kernel $(f^{-1}(*_N), *_M)$ and a cokernel $N/\on{Im}(f)$.
\item $\Amod$ has a symmetric monoidal structure $M \oplus N := M \vee N := M \coprod N / *_M \sim *_N$ which we will call "direct sum". 
\item If $R \subset M \oplus N$ is an $A$--submodule, then $R = (R \cap M) \oplus (R \cap N) $.  
\item $\Amod$ has a symmetric monoidal structure  $M \otimes N := M \wedge N := M \times N / \sim $, where $\sim$ is the equivalence relation generated by  $(a \cdot m, n) \sim (m, a \cdot n)$.  
\item $\oplus, \otimes$ satisfy the usual associativity and distributivity properties. 
\end{enumerate}

\bigskip

$M \in \; \Amod$ is \emph{finitely generated} if there exists a surjection $\oplus^n_{i=1} A \twoheadrightarrow M$ of $A$--modules for some $n$. Explicitly, this means that there are $m_1, \cdots, m_n \in M$ such that for every $m \in M$, $m = a \cdot m_i$ for some $1\leq i \leq n$, and we refer to the $m_i$ as \emph{generators}. $M$ is said to be \emph{free of rank n} if $M \simeq \oplus^n_{i=1} A$. 
For an element $m \in M$, define $$Ann_A (m) := \{ a \in A | a \cdot m = *_M \}.$$  Obviously $0_A \subset Ann_A (m) \;  \forall m \in M$. An element $m \in M$ is \emph{torsion} if $Ann_A (m) \neq 0_A$. The subset of all torsion elements in $M$ forms an $A$--submodule, called the \emph{torsion submodule} of $M$, and denoted $M_{tor}$. An $A$--module is \emph{torsion-free} if $M_{tor} = \{ *_M \}$ and \emph{torsion} if $M_{tor} = M$. 

\bigskip

Given a multiplicatively closed subset $S \subset A$ and an $A$--module $M$, we may form the $S^{-1} A$--module $S^{-1}M$, where
\[
S^{-1} M := \{ \frac{m}{s} \vert \; m \in M, s \in S \} 
\]
with the equivalence relation 
$$\frac{m}{s} = \frac{m'}{s'}  \iff \exists \; s'' \in S \textrm{ such that } s's'' m = s s'' m', $$
where the $S^{-1}A$--module structure is given by $\frac{a}{s} \cdot \frac{m}{s'} := \frac{am}{ss'}.$ For $f \in A$, we define $M_f$ to be $S^{-1}_f M$. 

Let $X$ be a topological space, and $\mc{A}$ a sheaf of monoids on $X$. We say that a sheaf of pointed sets $\mc{M}$ is an \emph{$\mc{A}$--module} if for every open set $U \subset X$, $\mc{M}(U)$ has the structure of an $\mc{A}(U)$--module with the usual compatibilities. In particular, given a monoid $A$ and an $A$--module $M$, there is a sheaf of $\mc{O}_{A}$--modules $\wt{M}$ on $\on{Spec} A$, defined on basic affine sets $D(f)$ by $\wt{M}(D(f)) := M_f $.  
For a monoid scheme $X$, a sheaf of $\mc{O}_X$--modules $\F$ is said to be \emph{quasicoherent} if for every $x \in X$  there exists an open affine $U_x \subset X$ containing $x$ and an $O_X (U_x)$--module $M$ such that $\F \vert_{U_x} \simeq \wt{M}$.  $\F$ is said to be \emph{coherent} if $M$ can always be taken to be finitely generated, and \emph{locally free} if $M$ can be taken to be free. Please note that here too our conventions are different from \cite{D}. For a monoid $A$, there is an equivalence of categories between the category of quasicoherent sheaves on $\on{Spec} A$ and the category of $A$--modules, given by $\Gamma(\on{Spec} A, \cdot)$. A coherent sheaf $\F$ on $X$ is \emph{torsion} (resp. \emph{torsion-free}) if $\F(U)$ is a torsion $\mc{O}_X (U)$--module (resp. torsion-free $\mc{O}_X (U)$--module ) for every open affine $U \subset X$. 
 If $X$ is connected, we can define the \emph{rank} of a locally free sheaf $\F$ to be the rank of the stalk $\F_{x}$ as an $\mc{O}_{X,x}$--module for any $x \in X$. A locally free sheaf of rank one will be called a \emph{line bundle}. 

\medskip

\begin{remark} \label{ttf}
It is clear that if $\F$ is torsion and $\F'$ torsion-free, then $\on{Hom}(\F, \F') = 0$. 
\end{remark}

\bigskip

\begin{remark} \label{subobjects}
It follows from property $(4)$ of the category $\Amod$ that if $\F,\F'$ are quasicoherent $\mc{O}_X$--modules, and $\mc{G} \subset \F\oplus \F'$ is a submodule, then \mbox{$\mc{G} = (\mc{G}\cap \F) \oplus (\mc{G} \cap \F' )$}, where for an open subset $U \subset X$,  
\begin{equation*} (\mc{G} \cap \F) (U) := \mc{G}(U) \cap \F (U). \end{equation*}
\end{remark}

\bigskip

If $X$ is a monoid scheme, we will denote by $Coh(X)$ the category of coherent \mbox{$\mc{O}_X$--modules} on $X$. It follows from the properties of the category $\Amod$ listed in section \ref{coh_sheaves} that $Coh(X)$ possesses a zero object $\emptyset$ (defined as the zero module $\emptyset$ on each open affine $\spec A \subset X$), kernels and cokernels, as well as monoidal structures $\oplus$ and $\otimes$. We may therefore talk about exact sequence in $Coh(X)$. A short exact sequence isomorphic to one of the form
\[
\emptyset \rightarrow \F \rightarrow \F \oplus \mc{G} \rightarrow \mc{G} \rightarrow \emptyset
\]
is called \emph{split}.  A coherent sheaf $\F$ which cannot be written as $\F=\F' \oplus \F''$ for non-zero coherent sheaves is called \emph{indecomposable}. A coherent sheaf containing no non-zero proper sub-sheaves is called \emph{simple}.

\medskip

If 
\[
\emptyset \rightarrow \F \overset{j_1}{\rightarrow} \F'  \overset{p_1}{\rightarrow} \F'' \rightarrow \emptyset
\]
and
\[
\emptyset \rightarrow \G \overset{j_2}{\rightarrow} \G'  \overset{p_2}{\rightarrow} \G'' \rightarrow  \emptyset
\]
are two short exact sequences in $Coh(X)$, we say that the short exact sequence 
\[
\emptyset \rightarrow \F \oplus \G \overset{j_1 \oplus  j_2}{\rightarrow} \F' \oplus \G' \overset{p_1 \oplus p_2}{\rightarrow} \F'' \oplus \G'' \rightarrow \emptyset
\]
is their \emph{direct sum}. 

\medskip

\begin{remark} \label{subobj2}
Because of the property of $Coh(X)$ discussed in Remark \ref{subobjects}, it follows that if $\F$ is an indecomposable coherent sheaf, and $\F \subset \F' \oplus \F''$, then $\F \subset \F'$ or $\F \subset \F''$. 
\end{remark}

Every coherent sheaf can be written as a direct sum of indecomposable coherent sheaves, and this decomposition is unique up to permutation of the factors. 

\medskip

We will make use of the gluing construction for coherent sheaves. Namely, suppose that $\mathfrak{U} = \{ U_i \}_{i \in I}$ is an affine open cover of $X$, and suppose we are given for each $i \in I$ a sheaf $\F_i$ on $U_i$, and for each $i,j \in I$ an isomorphism $\phi_{ij}: \F_i \vert_{U_i \cap U_j} \rightarrow \F_{j} \vert_{U_i \cap U_j}$ such that 
\begin{enumerate}
\item $\phi_{ii} = id$
\item For each $i,j,k \in I$, $\phi_{ik} = \phi_{jk} \circ \phi_{ij} $ on $U_i \cap U_j \cap U_k$
\end{enumerate}
Then there exists a unique sheaf $\F$ on $X$ together with isomorphisms $\psi_i: \F \vert_{U_i} \rightarrow \F_i$, such that for each $i,j \; \psi_j = \phi_{ij} \circ \psi_i$ on $U_i \cap U_j$. If moreover the $\F_i$ are coherent and $\phi_{ij}$ isomorphisms of coherent $\mc{O}_X$--modules, then $\F$ is itself coherent. 

\subsection{Normal modules, coherent sheaves, and exact sequences}

\begin{definition}
An $A$--module $(M,*_M)$ is \emph{normal} if it is an $A$--module in the category $\VFun$. More precisely, we require that for each $a \in A$, the map
\begin{align*}
l_a : M & \rightarrow M \\
m & \rightarrow a \cdot m
\end{align*}
be a morphism in $\VFun$  - i.e. in particular 
$l_a\vert_{M \backslash l_a^{-1} (*_M) }$ 
is an injection. A morphism $$f: (M,*_M) \rightarrow (N,*_N)$$ of $A$--modules is \emph{normal} if it is a morphism in $\VFun$ - i.e. 
$f\vert_{M \backslash f^{-1} (*_N) }$ is an injection. 
\end{definition}

\begin{definition}
Let $X$ be a monoid scheme, and $\F$ a coherent sheaf on $X$. $\F$ is \emph{normal} if for every open affine $U \subset X$, $\F = \wt{M}$ for a normal $\mc{O}_X (U)$--module $(M,*_M)$. A morphism of coherent sheaves $\phi: \F \rightarrow \F'$ is \emph{normal} if its restriction to every open affine $U \subset X$ is given by a normal morphism of $\mc{O}_X (U)$--modules. A short exact sequence of coherent sheaves
\[
\emptyset \rightarrow \F \rightarrow \F' \rightarrow \F'' \rightarrow \emptyset
\]
is \emph{normal} if $\F, \F', \F''$ are normal coherent sheaves, and all morphisms are normal. 
\end{definition}

Normal coherent sheaves and morphisms form a (non-full) subcategory of $Coh(X)$ possessing a zero object, kernels, cokernels, and closed under the operations $\oplus, \otimes$. We denote this subcategory by $Coh_n (X)$. 

\begin{remark}
The reason to restrict attention to the category $Coh_n (X)$ is that it is better behaved with respect to extensions. For example, even on $\Pone$, given coherent sheaves $\F, \F'$, $\on{Ext}^1 (\F,\F')$ may well be infinite (as a set), which introduces problems in the definition of the Hall algebra below. 
\end{remark}

From now on, we will focus primarily on the category $Coh_n (X)$. 

\section{Normal coherent sheaves on $\Aone$} \label{coh_a1}

In this section, we proceed to classify normal coherent sheaves on $\Aone = \spec \langle t \rangle $. By the results of section \ref{Amod}, each such corresponds to a finitely generated $\langle t \rangle$--module $(M, *_M)$. To $M$, we may attach a directed graph $\wt{\Gamma}_M$, whose vertices correspond to elements of $M$, and where we draw an arrow from $m$ to $t \cdot m$. The graph $\wt{\Gamma}_M$ has the property that
each vertex except for $*_M$ has at most one incoming arrow and exactly one outgoing arrow (this follows from normality). Let $\Gamma_M$ be the directed graph obtained from $\wt{\Gamma}_M$ by removing the vertex $*_M$ and all arrows leading to it. The connected components of $\Gamma_M$ correspond to normal indecomposable $\langle t \rangle$--modules. Let us consider the following three types of $\langle t \rangle$--modules:

\begin{enumerate}
\item The free module $\langle t \rangle$ viewed as a module over itself. The graph $\Gamma_{\langle t \rangle}$ consists of an infinite ladder with initial vertex $1$. $\langle t \rangle$ is torsion-free.
\item For $n \in \mathbb{N}$, let $E_n$ denote the $\langle t \rangle$--submodule $\{0, t^n, t^{n+1}, \cdots \}$ of $\langle t \rangle$. Let $T_n$ denote the quotient $\langle t \rangle$--module $\langle t \rangle/ E_n$. The graph $\Gamma_{T_n}$ is an $n$-step ladder. $T_n$ is a torsion module.

\item For $n \in \mathbb{N}$, let $C_n$ denote the $\langle t \rangle$--module whose elements are $\{ z_{[i]}, * \},  \; [i] \in \mathbb{Z}/n \mathbb{Z}$ (where the basepoint is denoted by $*)$, and where the action of $\langle t \rangle$ is given by $t \cdot z_{[i]} = z_{[i+1]}$, and $0$ sends everything to $*$. The graph $\Gamma_{C_n}$ is a directed cycle with $n$ vertices. $C_n$ is torsion-free. 
\end{enumerate}

\begin{remark} \label{tor_submodule}
If $m \leq n$, then the $\langle t \rangle$--module $T_n$ contains a unique submodule isomorphic to $T_m$ ( given by $\{ t^{n-m}, t^{n-m+1}, \cdots, t^{n-1}, 0 \}$ ) and we have a short exact sequence 
\begin{equation*}
\emptyset \rightarrow T_m \rightarrow T_n \rightarrow T_{n-m} \rightarrow \emptyset
\end{equation*}
\end{remark}

\begin{lemma}
\begin{enumerate}
\item Every indecomposable normal $\langle t \rangle$--module is one of $\langle t \rangle$, $T_n$, or $C_n$. 
\item Every finitely generated normal $\langle t \rangle$--module $M$ is of the form $$  \langle t \rangle^{\oplus m} \oplus^k_{i=1} T_{n_i} \oplus^l_{j=1} C_{n_j} $$
\end{enumerate}
\end{lemma}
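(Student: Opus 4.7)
The plan is to read everything off the directed graph $\Gamma_M$ introduced just before the lemma. As noted there, normality of the action of $t$ says that in $\wt{\Gamma}_M$ every non-basepoint vertex has at most one incoming arrow and exactly one outgoing arrow, so after deleting $*_M$ and the arrows landing on it, every vertex of $\Gamma_M$ has in-degree $\leq 1$ and out-degree $\leq 1$. A connected weakly-directed graph with these degree constraints is combinatorially one of five shapes: (i) a finite directed cycle $v_0 \to v_1 \to \cdots \to v_{n-1} \to v_0$; (ii) a finite directed path $v_1 \to v_2 \to \cdots \to v_n$; (iii) a one-sided infinite path $v_1 \to v_2 \to v_3 \to \cdots$ with a source $v_1$; (iv) a one-sided infinite path $\cdots \to v_{-2} \to v_{-1} \to v_0$ with a sink but no source; (v) a two-sided infinite path.

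Next I would identify the first three cases with the three modules in the statement. For (i), sending $v_i \mapsto z_{[i]}$ gives an isomorphism of $\langle t \rangle$-modules with $C_n$; for (ii), sending $v_i \mapsto t^{i-1}$ identifies the component with $T_n$ (the crucial point being that $v_n$ has no outgoing arrow in $\Gamma_M$, which means $t \cdot v_n = *_M$ in $M$); for (iii), the same formula $v_i \mapsto t^{i-1}$ yields an isomorphism with $\langle t \rangle$. What needs to be ruled out is (iv) and (v), and this is where finite generation enters: if $m_1, \dots, m_k$ generate $M$ then every nonzero element is of the form $t^j \cdot m_i$ for some $i,j \geq 0$, i.e.\ lies downstream of some generator. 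In shapes (iv) and (v), no single vertex lies upstream of all other vertices, so no finite set of vertices in such a component can reach the whole component by applying $t$. Since each connected component must contain a generator (elements of distinct components cannot be related by $t$-action), (iv) and (v) are excluded. This proves (1).

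For (2), the key observation is that the partition of $M \setminus *_M$ into connected components of $\Gamma_M$ gives a direct sum decomposition in $\langle t \rangle$-mod. Writing $\Gamma_M = \bigsqcup_\alpha \Gamma_M^{(\alpha)}$ and setting $M_\alpha := \{*_M\} \cup \mathrm{Vert}(\Gamma_M^{(\alpha)})$, each $M_\alpha$ is a normal $\langle t \rangle$-submodule (the $t$-action preserves components since arrows stay within components, and arrows to $*_M$ land in the shared basepoint), and $M = \bigoplus_\alpha M_\alpha$ in the wedge-sum sense of Section~\ref{Amod}. Finite generation forces the index set to be finite, and by (1) each $M_\alpha$ is isomorphic to $\langle t \rangle$, some $T_{n}$, or some $C_{n}$. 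Collecting these summands by type yields the asserted form.

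The actual mathematical content is concentrated in one place: showing that shapes (iv) and (v) cannot occur in a finitely generated module. Everything else is a careful bookkeeping of the elementary graph-theoretic fact that a graph with in- and out-degree at most one is a disjoint union of (possibly infinite) paths and cycles, together with the matching of the three admissible shapes to the three standard modules.
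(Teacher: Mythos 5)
Your proof is correct and follows essentially the same route as the paper: classify connected directed graphs with in- and out-degree at most one, match the admissible shapes to $\langle t \rangle$, $T_n$, $C_n$, and decompose $\Gamma_M$ into connected components for part (2). The one substantive addition is your explicit use of finite generation to exclude the backward-infinite and two-sided infinite paths --- a point the paper's ``it is immediate'' glosses over but which is genuinely needed, since e.g.\ $\langle t, t^{-1} \rangle$ viewed as a $\langle t \rangle$--module realizes the two-sided infinite path.
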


\begin{proof}
For (a), it suffices to classify connected directed graphs $\Gamma$
having the properties: every vertex of $\Gamma$ has at most one incoming 
arrow, and every vertex of $\Gamma$ has at most one outgoing arrow. It 
is immediate that the three possibilities enumerated above are the only ones.
Part (b) is now immediate from the decomposition of the graph
$\Gamma_M$ into connected components. 
%
%
\end{proof}


\section{Normal coherent sheaves on $\Pone$} \label{coh_p1}

In this section, we study normal coherent sheaves on $\Pone$. Recall from example \ref{P1} that $\Pone$ is obtained by gluing two copies of $\mathbb{A}^1$ labeled $U_{0}, U_{\infty}$, and has three points - $0, \infty, \textrm{ and } \eta$, where the first two are closed, and $\eta$ is the generic point. 
The data of a coherent sheaf on $\Pone$ therefore corresponds to a pair of coherent sheaves $\F_0, \F_{\infty}$ on $U_0, U_{\infty}$, together with a clutching isomorphism $$ \tau: \F_0 \vert_{U_0 \cap U_\infty} \rightarrow  \F_{\infty} \vert_{U_0 \cap U_\infty}. $$ In other words, a $\langle t \rangle$-- module $M_0$, a  $\langle t^{-1} \rangle$-- module $M_{\infty}$, and a $\langle t, t^{-1} \rangle$--equivariant isomorphism 
$$ \wt{\tau}: S_0^{-1} M_0 \rightarrow S_{\infty}^{-1} M_{\infty},$$ where $S_0 = \{1, t, t^2, \cdots \} \subset \langle t \rangle$ and $S_{\infty} = \{1, t^{-1}, t^{-2}, \cdots \} \subset \langle t^{-1} \rangle$. The resulting coherent sheaf $\F$ if normal if and only if $M_0, M_{\infty}$ are normal. 

Since the only open set of $\mathbb{A}^1$ containing the closed point is all of $\Aone$, any locally free sheaf on $\Aone$ is trivial. Observe furthermore that $\on{Aut}_{\mc{O}_{\Aone}} (\mc{O}^{\oplus n}_{\Aone}) = S_n$ (i.e. any automorphism of a free module is determined by a permutation of the generators). 

\subsection{Building blocks of $Coh_n(\Pone)$}

We proceed to introduce the building blocks of $Coh_n(\Pone)$ - the line bundles $\mc{O}(n)$, the cyclic sheaves $\mc{C}_n$,  and the torsion sheaves $\T_{x,n}, \; x = 0, \infty$. 

\begin{itemize}
\item Since $\on{Aut}(\mc{O}_{\Aone}) = \{ 1 \}$, the data of a line bundle on $\Pone_{\fun}$ corresponds to an $\mc{O}_{\Pone}$--linear clutching isomorphism
\[
\tau : \mc{O}_{U_0} \vert_{U_0 \cap U_\infty} \rightarrow \mc{O}_{U_\infty} \vert_{U_0 \cap U_\infty}
\]
i.e. an automorphism $\tau$ of $\{ \cdots t^{-2}, t^{-1}, 0, 1, t, t^2, \cdots \}$ equivariant with respect to the action of $\langle t,t^{-1} \rangle$ - these are of the form $\tau_m(t^k) = t^{k-m}$, and are therefore determined by an integer $m \in \mathbb{Z}$. Denote by $\mc{O}(m)$ the line bundle obtained from the clutching map $\tau_m$ (we have chosen the convention so that $\Gamma(\Pone, \mc{O}(m)) \neq \emptyset$  for $m \geq 0$).  

\item We may glue two copies of $C_n$, one on $U_0$ and one on $U_{\infty}$ along $U_0 \cap U_{\infty}$. Let $C^{0}_n$ be the $\langle t \rangle$--module $C_n$ from section \ref{coh_a1}, and $C^{\infty}_n$ the $\langle t^{-1} \rangle$--module with elements $\{ \wt{z}_{[j]}, * \}, \; j \in \mathbb{Z} / n \mathbb{Z}$, where $t^{-1} \cdot \wt{z}_{[j]} = \wt{z}_{[j-1]}$, and $0$ sends everything to $*$. Note that any element of $S_0^{-1} C^0_n$ (resp. $S_{\infty}^{-1} C^{\infty}_n$) is equivalent to one of the form $z_{[i]} = \frac{z_{[i]}}{1}$ (resp. $\wt{z}_{[j]} = \frac{\wt{z}_{[j]}}{1}$ ).  An isomorphism of $\langle t, t^{-1} \rangle$--modules
\[
\wt{\tau}: S_0^{-1} C^0_n \rightarrow S_{\infty}^{-1} C^{\infty}_n
\]
is determined by where it sends the element $z_{[0]} = \frac{z_{[0]}}{1}$, and the image $\wt{\tau} (z_{[0]})$ may be any non-zero element $\wt{z}_{[j]}$ of $S_{\infty}^{-1} C^{\infty}_n$. Let $\mc{C}_n [m]$ be the normal coherent sheaf on $\Pone$ given by the clutching isomorphism $\wt{\tau}_m(z_{[0]}) = \wt{z}_{[m]}$, $m \in \mathbb{Z}/ n \mathbb{Z}$. I claim that $\mc{C}_n [m] \simeq \mc{C}_n [m']$ for any $m, m' \in \mathbb{Z}/ n \mathbb{Z}$. The data of an isomorphism $$ \phi: \mc{C}_n [m] \rightarrow \mc{C}_n [m'] $$ is equivalent to the data of isomorphisms 
\begin{align*}
\phi_0 : C^0_n & \rightarrow C^0_n \\
\phi_{\infty} : C^{\infty}_n & \rightarrow C^{\infty}_n
\end{align*}
satisfying the property
\begin{equation*} \wt{\tau}_{m'} \circ S_0^{-1} \phi_0 =
S^{-1}_{\infty} \phi_{\infty} \circ \wt{\tau}_m. \label{compat} 
\end{equation*}
Let $k_0, k_{\infty} \in \mathbb{Z}/ n \mathbb{Z}$ be such that $k_0 + m' \equiv k_{\infty} + m \mod n.$ Then we may define $\phi_0 (z_{[i]}) = z_{[i+k_0]}$, and $\phi_{\infty}(\wt{z}_{[j]}) = \wt{z}_{[j+k_{\infty}]}$. One checks easily that the condition \ref{compat} is satisfied, so that $\phi_0, \phi_{\infty}$ indeed glue to yield an isomorphism. We denote the corresponding isomorphism class of normal coherent sheaf on $\Pone$ by $\mc{C}_n$. $\mc{C}_n$ is torsion-free and simple. 

\item Since $T_n$ is a torsion sheaf, we have $S^{-1}_0 T_n = 0$, which implies that a sheaf on $U_0$ isomorphic to $T_n$ can only be glued to the $0$ sheaf on $U_{\infty}$ (and similarly with the roles of $U_0, U_{\infty}$ interchanged ). We thus obtain torsion sheaves $\T_{0,n}, \T_{\infty, n}$ on $\Pone$ supported at $0, \infty$ respectively.


\end{itemize}

\medskip

\begin{remark} It is clear that each of $\mc{O}(n), \mc{C}_{m}, \mc{T}_{0,k}, \mc{T}_{\infty, l}$ is indecomposable. \end{remark}

\bigskip

We are now ready to classify normal coherent sheaves on $\Pone$. 

\begin{lemma} \label{gluing_lemma}
Let $X$ be a monoid scheme, $\{ \F_i \}_{i \in I}$, $\{ \mc{G}_{j} \}_{j \in J}$ finite collections of indecomposable normal coherent sheaves on $X$,
 and $$\psi: \oplus_{i \in I} \F_i \rightarrow \oplus_{j \in J} \mc{G}_j $$ an isomorphism. Then $\vert I \vert = \vert J \vert$, and $\psi$ is given by collection of isomorphisms $\psi_i : \F_i \rightarrow \mc{G}_{j_i}, \; i \in I$. 
\end{lemma}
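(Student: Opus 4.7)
The strategy is to apply Remark \ref{subobj2} (iterated for finitely many summands) to both $\psi$ and $\psi^{-1}$, and then carry out a short diagram chase to pair up the summands.

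For each $i \in I$, view $\F_i$ as a subsheaf of $\oplus_{i' \in I} \F_{i'}$ in the canonical way. Since $\psi$ is an isomorphism of normal coherent sheaves, its restriction $\psi\vert_{\F_i}$ is an $\mc{O}_X$-linear injection (it is bijective as a map of sheaves of pointed sets), so $\psi(\F_i) \subset \oplus_{j \in J} \G_j$ is a sub-$\mc{O}_X$-module isomorphic to $\F_i$ and is therefore indecomposable. Iterating Remark \ref{subobj2}, there is a unique $\sigma(i) \in J$ with $\psi(\F_i) \subset \G_{\sigma(i)}$, and this defines a map $\sigma: I \to J$. Applying the identical argument to $\psi^{-1}$ produces $\tau: J \to I$ with $\psi^{-1}(\G_j) \subset \F_{\tau(j)}$.

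Next I would show $\sigma$ and $\tau$ are mutually inverse. From $\psi(\F_i) \subset \G_{\sigma(i)}$ one gets $\F_i \subset \psi^{-1}(\G_{\sigma(i)}) \subset \F_{\tau(\sigma(i))}$. Since distinct summands of the direct sum $\oplus_{i' \in I} \F_{i'}$ meet only at the basepoint (by property (4) of $\Amod$ and the construction $\oplus = \vee$), and since the indecomposable $\F_i$ is non-zero, this forces $\tau(\sigma(i)) = i$; symmetrically $\sigma(\tau(j)) = j$, so $|I| = |J|$ and $\sigma$ is a bijection. Finally, combining $\psi(\F_i) \subset \G_{\sigma(i)}$ with $\psi^{-1}(\G_{\sigma(i)}) \subset \F_i$ (equivalently $\G_{\sigma(i)} \subset \psi(\F_i)$) yields $\psi(\F_i) = \G_{\sigma(i)}$, so the restriction $\psi_i := \psi\vert_{\F_i}: \F_i \to \G_{\sigma(i)}$ is a bijective $\mc{O}_X$-linear normal morphism, i.e.\ the required isomorphism.

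The only real subtlety lies in the first step: one must verify that $\psi(\F_i)$ is genuinely a sub-$\mc{O}_X$-module (not merely a sub-pointed-set), and that it is isomorphic to $\F_i$ as an $\mc{O}_X$-module, so that indecomposability transfers and Remark \ref{subobj2} applies. This uses the $\mc{O}_X$-linearity of $\psi$ together with its bijectivity. Once this is in hand, everything else is formal bookkeeping with the two maps $\sigma$ and $\tau$.
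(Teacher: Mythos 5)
Your proof is correct and takes essentially the same route as the paper: both arguments hinge on Remark \ref{subobj2} to place the image of each indecomposable summand inside a single summand of the target, applied to both $\psi$ and $\psi^{-1}$. The paper organizes this as an induction on $\vert I \vert$, peeling off one matched pair $\F_{i_1} \to \G_{j_{i_1}}$ at a time, whereas you set up the two maps $\sigma$ and $\tau$ globally and check they are mutually inverse; the substance is identical.
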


\begin{proof}
We may reduce to the case where $X$ is affine, and proceed by induction on $\vert I \vert$. The statement is obvious when $\vert I \vert = 1$. Suppose the statement holds for $\vert I \vert \leq k$, and suppose that $\vert I \vert = k+1$. Let $i_1 \in I$, then by Remark \ref{subobj2}, we have
\[
\psi(\F_{i_1}) = \oplus_{j \in J} \psi(\F_{i_1}) \cap \mc{G}_j
\]
Since $\psi(\F_{i_1})$ is indecomposable, we have $\psi(\F_{i_1}) \subset \mc{G}_{j_{i_1}}$ for some $j_{i_1} \in J$. We thus have $\psi^{-1} (\mc{G}_{j_{i_1}}) = \F_{i_1} \oplus \mc{K}$ for some normal coherent sheaf $\mc{K}$. Since $\mc{G}_{j_{i_1}}$ is indecomposable, $\mc{K} = 0$, and $\psi \vert_{\F_{i_1}}: \F_{i_1} \rightarrow \mc{G}_{j_{i_1}}$ is an isomorphism. Thus, $\psi \vert \oplus_{i \neq i_1} \F_i \rightarrow \oplus_{j \neq j_{i_1}} \mc{G}_j$ is an isomorphism, and the statement follows by induction. 
\end{proof}

\begin{theorem} \label{indecomposables}
Every normal coherent sheaf on $\Pone$ is a direct sum of the indecomposable
normal coherent sheaves $\mc{O}(n), \mc{C}_{m}, \mc{T}_{0,k},
 \mc{T}_{\infty, l}$, for $n \in \mathbb{Z}$, \; $m, k, l \in \mathbb{N}$. 
\end{theorem}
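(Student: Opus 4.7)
The plan is to combine the gluing description of sheaves on $\Pone$ with the classification of normal indecomposable $\langle t \rangle$--modules from Section \ref{coh_a1}, and then to apply Lemma \ref{gluing_lemma} over the overlap $U_0 \cap U_\infty$ to read off the global decomposition. A normal coherent sheaf $\F$ on $\Pone$ is represented by a triple $(M_0, M_\infty, \wt{\tau})$, where $M_0$ is a normal $\langle t \rangle$--module, $M_\infty$ is a normal $\langle t^{-1} \rangle$--module, and $\wt{\tau}: S_0^{-1} M_0 \rightarrow S_\infty^{-1} M_\infty$ is an isomorphism of $\langle t, t^{-1}\rangle$--modules. The classification lemma of Section \ref{coh_a1} then yields decompositions
\[
M_0 \simeq \langle t \rangle^{\oplus a} \oplus \bigoplus_i T^0_{n_i} \oplus \bigoplus_j C^0_{m_j}, \qquad M_\infty \simeq \langle t^{-1} \rangle^{\oplus b} \oplus \bigoplus_k T^\infty_{p_k} \oplus \bigoplus_l C^\infty_{q_l}.
\]

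Next I compute localizations summand by summand. The module $T^0_n$ is killed by $t^n$, so $S_0^{-1} T^0_n = \emptyset$; the free summand $\langle t \rangle$ localizes to $\langle t, t^{-1} \rangle$; and $S_0^{-1} C^0_m$ remains a directed cycle of length $m$, now viewed as a $\langle t, t^{-1} \rangle$--module, and hence is still indecomposable. Analogous statements hold for $S_\infty^{-1}$. Since localization commutes with $\oplus$, this produces decompositions of $S_0^{-1} M_0$ and $S_\infty^{-1} M_\infty$ into indecomposable $\langle t, t^{-1}\rangle$--modules whose surviving types (one infinite free module and the various finite cycles) are pairwise non-isomorphic by cardinality and by the shape of the associated graph. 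Applying Lemma \ref{gluing_lemma} to $\wt{\tau}$, I conclude that it must pair up the free summands on the two sides (forcing $a=b$) and match cyclic summands of equal length.

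Finally, I identify the resulting pieces with the declared building blocks. The unpaired torsion summands $T^0_{n_i}$ are glued to $\emptyset$ on $U_\infty$ and therefore realize the torsion sheaves $\T_{0, n_i}$; the $T^\infty_{p_k}$ similarly realize $\T_{\infty, p_k}$. Each matched pair of free summands is a line bundle, and is pinned down up to isomorphism by an automorphism of $\langle t, t^{-1}\rangle$ as a module over itself, i.e.\ by multiplication by some $t^m$ with $m \in \mathbb{Z}$, giving $\mc{O}(m)$. Each matched pair of cyclic summands gives a $\mc{C}_m[j]$, which the discussion preceding the statement showed is isomorphic to $\mc{C}_m$ independently of $j$. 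Assembling everything, $\F$ is a direct sum of the sheaves $\mc{O}(n)$, $\mc{C}_m$, $\T_{0,k}$, $\T_{\infty, l}$.

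The step I expect to be the main obstacle is verifying cleanly that Lemma \ref{gluing_lemma} rules out any ``cross-matching'' of indecomposables under $\wt{\tau}$. This reduces to a short check that the three types of localized indecomposable over $\langle t, t^{-1}\rangle$ -- namely $\langle t, t^{-1}\rangle$ itself, $S_0^{-1} C^0_m$, and the zero module -- are pairwise non-isomorphic, which is evident from their directed-graph descriptions and cardinalities. Once this is in hand, the remainder of the argument is bookkeeping.
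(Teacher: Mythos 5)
Your proof is correct and follows essentially the same route as the paper: the paper likewise deduces the theorem by applying Lemma \ref{gluing_lemma} over $X = U_0 \cap U_\infty$ to the clutching isomorphism between $S_0^{-1} M_0$ and $S_\infty^{-1} M_\infty$, after decomposing $M_0$ and $M_\infty$ into indecomposables. You have merely written out the localization bookkeeping (torsion summands vanish, free summands become $\langle t, t^{-1}\rangle$, cycles stay cycles) that the paper leaves implicit.
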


\begin{proof}
This follows from Lemma \ref{gluing_lemma}, with $X = U_0 \cap U_{\infty}$, $\psi = {\tau}$, and the collections of normal coherent sheaves $\{ \F_i \}, \{ \mc{G}_j \}$ corresponding to the $\langle t, t^{-1} \rangle$ modules $S_0^{-1} M_0$ and $S_{\infty}^{-1} M_{\infty}$ respectively. 
\end{proof}

\begin{corollary}
\begin{enumerate}
\item Every normal coherent sheaf on $\Pone$ is of the form
\begin{equation*} 
\F = \F_{tor} \oplus \F_{tf}
\end{equation*}
with $\F_{tor}$ a torsion coherent sheaf, and $\F_{tf}$ torsion-free. 
\item $\F_{tor}$ is a direct sum of sheaves $\mc{T}_{0,k},
\mc{T}_{\infty,l}$ for $k,l \in \mathbb{N}$, and $\F_{tf}$ is a direct sum 
of sheaves $\mc{O}(n), \mc{C}_m$ for $n \in \mathbb{Z}, m \in\mathbb{N}$. 
\item Every locally free coherent sheaf on $\Pone$ is a direct sum of $\mc{O}(n)$. 
\end{enumerate}
\end{corollary}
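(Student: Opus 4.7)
The plan is to deduce all three parts directly from Theorem \ref{indecomposables}, which already provides the full list of indecomposable normal coherent sheaves on $\Pone$. Any normal coherent sheaf $\F$ then has a direct sum decomposition with summands drawn from the four families $\mc{O}(n)$, $\mc{C}_m$, $\T_{0,k}$, $\T_{\infty,l}$, and the corollary amounts to sorting these summands according to their torsion and freeness properties.

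For parts (1) and (2), I would first observe that among the four families, the sheaves $\mc{O}(n)$ and $\mc{C}_m$ are torsion-free, while $\T_{0,k}$ and $\T_{\infty,l}$ are torsion. This is verified on affine charts: the $\langle t \rangle$-module $\langle t \rangle$ is tautologically torsion-free; $C_m$ is torsion-free since the action of every nonzero element of $\langle t \rangle$ merely permutes the nonzero elements of $C_m$; and each $T_n$ is entirely torsion because the element $t^i$ is annihilated by $t^{n-i}$. Since on each open affine the direct sum of torsion (respectively torsion-free) modules is again torsion (respectively torsion-free), grouping the indecomposable summands of $\F$ into these two classes yields the decomposition $\F = \F_{tor} \oplus \F_{tf}$, proving (1), with $\F_{tor}$ a direct sum of $\T_{0,k}, \T_{\infty,l}$ and $\F_{tf}$ a direct sum of $\mc{O}(n), \mc{C}_m$, which gives (2).

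For part (3), I would rule out both torsion and cyclic summands in a locally free sheaf. Torsion summands are excluded because any free $\langle t \rangle$-module is torsion-free, so a locally free sheaf is torsion-free on every affine chart. Cyclic summands $\mc{C}_m$ are excluded because $\mc{C}_m \vert_{U_0} \simeq \wt{C_m}$, and $C_m$ is not free as a $\langle t \rangle$-module: the element $z_{[0]}$ satisfies $t^m \cdot z_{[0]} = z_{[0]}$, whereas in any free module $\langle t \rangle^{\oplus n}$ no nonzero element is fixed by a positive power of $t$ (the orbit $\{t^k \cdot e_i\}_{k \geq 0}$ consists of pairwise distinct elements). Hence only the line bundles $\mc{O}(n)$ can appear as summands of a locally free sheaf.

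The main obstacle is essentially absent: the corollary is a bookkeeping consequence of Theorem \ref{indecomposables}, and the only items that need verification are the torsion/torsion-free/free character of each indecomposable building block, all of which are immediate from inspection of the associated $\langle t \rangle$- and $\langle t^{-1} \rangle$-modules on the affine charts $U_0$ and $U_\infty$.
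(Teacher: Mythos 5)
Your proof is correct and follows the same route the paper intends: the paper states this corollary without proof as an immediate consequence of Theorem \ref{indecomposables} together with the torsion/torsion-free properties of the building blocks already recorded in Sections \ref{coh_a1} and \ref{coh_p1}, and your writeup simply makes those verifications (including the correct observation that $C_m$ is not free because $z_{[0]}$ is fixed by $t^m$) explicit.
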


\medskip

\begin{lemma} \label{ninjection}
Let $X$ be a monoid scheme, and  $\F, \{ \mc{G}_j \}_{j \in J}$ indecomposable normal coherent sheaves on $X$. Then any exact sequence of the form 
\[
\emptyset \rightarrow \F \overset{f}{\rightarrow} \oplus_{j \in J} \mc{G}_j  \overset{g}{\rightarrow} \mc{H} \rightarrow \emptyset
\]
is a direct sum of exact sequences
\begin{equation*} 
\emptyset \rightarrow \F \overset{f}{\rightarrow} \mc{G}_r  \overset{p}{\rightarrow} \mc{G}_r / \F \rightarrow \emptyset
\end{equation*}
and
\begin{equation*}
\emptyset \rightarrow \emptyset {\rightarrow} \oplus_{j \neq r, j \in J} \mc{G}_j  \overset{q}{\rightarrow}  \oplus_{j \neq r, j \in J} \mc{G}_j  \rightarrow \emptyset
\end{equation*}
where $r \in J$, $f$ is an injective map of coherent sheaves, and $q$ an isomorphism. 
\end{lemma}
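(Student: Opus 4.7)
The plan is to work locally on an open affine chart of $X$ (so we deal with modules over a monoid), then combine two structural principles already established: the decomposition of submodules of a direct sum (Remark \ref{subobjects}) and its consequence that an indecomposable subobject lies in a single summand (Remark \ref{subobj2}). Exactness at $\F$ forces $f$ to be injective, since $\ker f = \emptyset$. Because $\F$ is indecomposable and $f$ embeds $\F$ into $\oplus_{j \in J} \mc{G}_j$, Remark \ref{subobj2} then produces an index $r \in J$ with $f(\F) \subseteq \mc{G}_r$.

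I would next split off the factors $\mc{G}_j$ with $j \neq r$ at the level of quotients. Consider the morphism
\[
q : \bigoplus_{j \in J} \mc{G}_j \longrightarrow (\mc{G}_r / f(\F)) \oplus \bigoplus_{j \neq r} \mc{G}_j
\]
given by the canonical projection $\mc{G}_r \twoheadrightarrow \mc{G}_r/f(\F)$ on the $r$-th factor and the identity on the remaining factors. Any element of $\ker q$ generates a submodule of $\oplus_{j \in J} \mc{G}_j$, and by Remark \ref{subobjects} this submodule decomposes as a direct sum along the $\mc{G}_j$. The component in each $\mc{G}_j$ for $j \neq r$ must vanish (the identity has trivial kernel), while the component in $\mc{G}_r$ must lie in $f(\F)$. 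Hence $\ker q = f(\F) = \ker g$, and the cokernel property inherited from the category of $\mc{O}_X$-modules yields a unique isomorphism $\mc{H} \simeq (\mc{G}_r/f(\F)) \oplus \oplus_{j \neq r} \mc{G}_j$ intertwining $g$ and $q$.

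Under this identification the original sequence matches the direct sum (in the sense defined in Section \ref{coh_sheaves}) of the short exact sequence $\emptyset \to \F \to \mc{G}_r \to \mc{G}_r/f(\F) \to \emptyset$ with the identity sequence on $\oplus_{j \neq r} \mc{G}_j$, which is the claimed decomposition. The local picture then glues across affine charts using the compatibility of kernels and cokernels with restriction. The main obstacle is verifying the splitting rigorously in the non-additive category $Coh_n(X)$: one cannot invoke the usual additive arguments, and everything hinges on Remark \ref{subobjects}, which encodes the fact that a subsheaf of a direct sum in $Coh_n(X)$ decomposes as a direct sum of its intersections with the summands. With that remark in hand, the kernel/cokernel calculation above goes through exactly as in the additive setting.
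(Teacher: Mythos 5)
Your proposal is correct and follows essentially the same route as the paper: the paper's proof simply reduces to the affine case, invokes Remark \ref{subobj2} to place $f(\F)$ inside a single summand $\mc{G}_r$, and asserts that the rest follows. Your write-up fills in the details the paper leaves implicit (the identification of $\ker g$ with $f(\F)$ and of $\mc{H}$ with $(\mc{G}_r/f(\F)) \oplus \bigoplus_{j \neq r} \mc{G}_j$), and those details are accurate.
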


\begin{proof}
We may reduce to the case where $X$ is affine. By Remark \ref{subobj2}, $f$ maps $\F$ injectively to a subsheaf of some $\mc{G}_r, r \in J$. The rest of the Lemma follows from this observation.
\end{proof}

\medskip

\begin{lemma} \label{homs}
\begin{align}
\label{part1} \on{Hom}(\T_{x,n}, \F)  &= 0  \textrm{ if } \F \textrm{ is torsion-free } \\
\label{part3} \on{Hom}(\T_{x,n}, \T_{x',m}) &= 0 \textrm{ if } x \neq x' \\
\label{part4} \on{Hom}(\mc{C}_n, \mc{C}_m) &= 0 \textrm{ if } n \neq m \\ 
\label{part5} \on{Hom}(\mc{C}_n, \mc{T}_{x,m}) &= 0 \\ 
\label{part6}  \on{Hom}(\mc{C}_n, \mc{O} (m) ) &= 0 \\ 
\label{part7}  \on{Hom}(\mc{O}(m), \mc{C}_n ) &= 0 
\end{align}
\end{lemma}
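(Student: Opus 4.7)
The plan is to treat the six identities as case-by-case verifications, most of them reducing to straightforward computations on the two affine charts $U_0, U_\infty$. Part \eqref{part1} is immediate from Remark \ref{ttf}, since $\T_{x,n}$ is torsion and $\F$ is torsion-free. Part \eqref{part3} follows from disjointness of supports: taking $x=0, x'=\infty$ (the other case being symmetric), $\T_{0,n}$ restricts to the zero sheaf on $U_\infty$ while $\T_{\infty,m}$ restricts to the zero sheaf on $U_0$, so any morphism is forced to vanish on each chart and hence globally.

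For parts \eqref{part4}--\eqref{part7}, the common strategy is: a morphism of coherent sheaves is determined by its restrictions $\phi_0$ and $\phi_\infty$ to $U_0$ and $U_\infty$ (subject to compatibility on the overlap), so it suffices to prove that each $\phi_0$ and $\phi_\infty$ must be zero. In each case I work on $U_0$ (the argument on $U_\infty$ is identical under the canonical identification $U_\infty \simeq \Aone$). A normal morphism out of a cyclic or free $\langle t\rangle$-module is determined by the image of a distinguished generator, and the combination of equivariance with normality will force that image to be the basepoint.

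Concretely, for part \eqref{part4}, if $\phi_0: C_n^0 \to C_m^0$ sends $z_{[0]}$ to $z_{[j]}$, then the relation $t^n\cdot z_{[0]} = z_{[0]}$ forces $z_{[j]} = z_{[j+n]}$ in $C_m^0$, i.e.\ $m \mid n$; normality requires the induced map on indices $\mathbb{Z}/n \to \mathbb{Z}/m$ to be injective, forcing $n \mid m$; together these give $n=m$, contradicting the hypothesis. For parts \eqref{part5} and \eqref{part6}, the image $v = \phi_0(z_{[0]})$ in $T_m$ or in $\langle t\rangle$ must satisfy $t^n\cdot v = v$; direct inspection of these modules (noting $n\geq 1$) shows that $v=\ast$ is the only solution. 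For part \eqref{part7}, if $\phi_0: \langle t\rangle \to C_n^0$ sends $1$ to a non-basepoint $v$, then $\phi_0(1) = v = t^n v = \phi_0(t^n)$ while $1\neq t^n$ in $\langle t\rangle$ and both map to non-basepoints, violating normality; so $\phi_0 = 0$.

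The key observation driving all of \eqref{part4}--\eqref{part7} is that the cyclicity relation $t^n\cdot z_{[0]} = z_{[0]}$ is extremely restrictive: it forces the image of the generator to satisfy $t^n v = v$, and in each of the target modules at hand (other cyclic sheaves of mismatched period, line bundles, torsion sheaves) the only such element is the basepoint; in the one remaining case \eqref{part7}, where the source is free, it is rather the injectivity condition of normality that fails. I do not anticipate a real obstacle here; the main care needed is bookkeeping to ensure that the normality condition is used correctly in every subcase, and that the argument transported to $U_\infty$ via $t \leftrightarrow t^{-1}$ remains valid.
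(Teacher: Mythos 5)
Your proof is correct and follows essentially the same route as the paper's: \eqref{part1} from Remark \ref{ttf}, \eqref{part3} from disjointness of supports, and \eqref{part5}--\eqref{part7} by restricting to the affine charts and inspecting the corresponding $\langle t\rangle$-- and $\langle t^{-1}\rangle$--modules, with normality carrying the weight in \eqref{part7} exactly as you say. The only (minor) divergence is \eqref{part4}, where the paper simply invokes simplicity of the sheaves $\mc{C}_n$ rather than your explicit index computation; both arguments are valid.
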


\begin{proof} 
Part 
(\ref{part1}) follows from remark \ref{ttf}. (\ref{part3}) follows from the observation that $\T_{x,n}$ and $\T_{x',m}$ have disjoint supports. (\ref{part4})  follows from the fact that $\mc{C}_n$ are simple. 
Finally, parts (\ref{part5}), (\ref{part6}), (\ref{part7}) follow by
restricting to $U_0, U_{\infty}$ and the corresponding statements for
$\langle t \rangle, \langle t^{-1} \rangle$--modules. 
\end{proof}

\begin{lemma} \label{oinj}
\begin{align*}
\vert \on{Hom}( \mc{O}(n), \mc{O}(m) ) \vert & = \left\{ \begin{array}{l} 0 \textrm{ if $ n > m $ } \\ m - n +1 \textrm{ if $n \leq m$ } \end{array} \right. 
\end{align*}
\end{lemma}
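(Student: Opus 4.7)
The plan is to unpack the gluing description of $\mc{O}(n)$ and $\mc{O}(m)$ from the definition given in Section~\ref{coh_p1} and directly enumerate the pairs of local module maps that glue. By the gluing construction, a morphism $\phi: \mc{O}(n) \to \mc{O}(m)$ consists of a $\langle t\rangle$-module morphism $\phi_0 : \langle t \rangle \to \langle t \rangle$, a $\langle t^{-1}\rangle$-module morphism $\phi_\infty : \langle t^{-1}\rangle \to \langle t^{-1}\rangle$, and a compatibility condition on $U_0 \cap U_\infty$ requiring
\[
\tau_m \circ S_0^{-1} \phi_0 = S_\infty^{-1}\phi_\infty \circ \tau_n,
\]
where $\tau_k(t^j)=t^{j-k}$ is the clutching isomorphism for $\mc{O}(k)$.

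The first step is to classify the two affine pieces. Since $\langle t \rangle$ is free of rank one over itself, $\phi_0$ is determined by $\phi_0(1)$, which must be either $0$ (giving the zero map) or $t^a$ for some $a\ge 0$ (in which case $\phi_0(t^k)=t^{k+a}$, automatically a normal morphism). Similarly $\phi_\infty$ is either zero or of the form $\phi_\infty(t^{-k})=t^{-k-b}$ for some $b\ge 0$.

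The second step is to impose the compatibility. Applying both sides of the compatibility to $t^k \in \langle t,t^{-1}\rangle$, I compute
\[
\tau_m(S_0^{-1}\phi_0(t^k)) = t^{k+a-m}, \qquad S_\infty^{-1}\phi_\infty(\tau_n(t^k)) = t^{k-n-b},
\]
which forces $a+b = m-n$. If one of $\phi_0, \phi_\infty$ vanishes while the other does not, the same equation, combined with the fact that $\tau_n$ and $\tau_m$ are isomorphisms, is immediately contradicted; so the only mixed case is the zero morphism.

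The final step is the count. When $m<n$, the equation $a+b = m-n$ has no solutions with $a,b\ge 0$, so only the zero morphism exists and $|\on{Hom}(\mc{O}(n),\mc{O}(m))|=0$. When $m\ge n$, the pairs $(a,b)=(0,m-n),(1,m-n-1),\ldots,(m-n,0)$ yield $m-n+1$ distinct non-zero morphisms, giving $|\on{Hom}(\mc{O}(n),\mc{O}(m))| = m-n+1$. I do not anticipate any real obstacle; the argument is a direct unwinding of the clutching data, and the only point requiring a little care is ruling out the mixed case where exactly one of $\phi_0, \phi_\infty$ is zero.
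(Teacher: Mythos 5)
Your proof is correct and follows essentially the same route as the paper: restrict a morphism to the two affine charts $U_0, U_\infty$, observe that each local piece is multiplication by a nonnegative power of the local coordinate, and reduce the compatibility on the overlap to the single equation $a+b=m-n$ with $a,b\ge 0$, whose solution count gives the answer. Your explicit handling of the case where exactly one local piece vanishes is a small point the paper leaves implicit, but it is not a different argument.
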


\begin{proof}
Let $\rho \in \on{Hom}(\mc{O}(n), \mc{O}(m))$, and let $\rho_0$ (resp. $\rho_{\infty}$) denote the restriction of $\rho$ to $U_0$ (resp. $U_{\infty}$). Trivializing $\mc{O}(n), \mc{O}(m)$ on $U_0, U_{\infty}$, we may write $\rho_0 (t^k) = t^{k+a_0}$, $\rho(t^l) = t^{l + a_{\infty}}$, where $a_0 \geq 0$, $a_{\infty} \leq 0$. The condition $\rho_0 \vert_{U_0 \cap U_{\infty}} = \rho_{\infty} \vert_{U_0 \cap U_{\infty}}$ becomes 
$$ \sigma_m \circ \rho_0 = \rho_{\infty} \circ \sigma_n,$$
which reduces to $a_0 - a_{\infty} = m -n .$ It is clear that the number of solutions to this equation with $a_0 \geq 0, a_{\infty} \leq 0$ is $m-n+1$ if $m \geq n$ and $0$ otherwise. 
\end{proof}

\begin{remark} \label{rmk2}
It follows from the proof of the last Lemma that if
\[
\emptyset \rightarrow \mc{O}(n) \rightarrow \mc{O}(m) \rightarrow \T \rightarrow \emptyset
\]
is a short exact sequence, then $\T \simeq \T_{0,k_0} \oplus
\T_{\infty,k_{\infty}}$ with $k_0 + k_{\infty} = m - n \ge0$. 
There is precisely one such short exact sequence with fixed cokernel. 
\end{remark}

\bigskip

\begin{lemma} \label{tor_maps}
If $x \in \{ 0, \infty \}$, then 
$$\vert \on{Hom}(\T_{x,m}, \T_{x,n}) \vert =  \left\{ \begin{array}{l} 1 \textrm{ if } n \geq m   \\ 0 \textrm{ if } n < m  \end{array} \right. $$
\end{lemma}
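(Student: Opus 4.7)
My plan is to reduce the problem to counting normal morphisms between the corresponding local modules, and then enumerate these directly. Since each $\mc{T}_{x,n}$ is supported at its respective closed point, the computation essentially takes place on a single affine chart.

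First I would set up the reduction. By construction, $\mc{T}_{x,n}$ is given by $T_n$ on the affine chart $U_x$ and by the zero sheaf on the complementary chart $U_{x'}$, with clutching data forced to be the trivial identification $\emptyset \to \emptyset$ on the overlap $U_0 \cap U_\infty$. Consequently, a normal morphism $\phi:\mc{T}_{x,m}\to\mc{T}_{x,n}$ is determined entirely by its restriction to $U_x$, which is a normal morphism of modules $T_m\to T_n$ over $A = \langle t\rangle$ if $x=0$ or $A = \langle t^{-1}\rangle$ if $x=\infty$; the compatibility on the overlap is vacuous since both sheaves vanish there. By the evident symmetry, it suffices to treat the case $x = 0$ and count normal morphisms $\phi:T_m \to T_n$ of $\langle t\rangle$-modules.

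Next I would enumerate such maps. Since $1\in T_m$ generates the whole module, $\phi$ is determined by $\phi(1)\in T_n$. Writing the nonzero elements of $T_n$ as $\{1,t,\ldots,t^{n-1}\}$ and setting $\phi(1) = t^j$ with $0\le j \le n-1$ (or $\phi(1) = *_{T_n}$ for the zero map), equivariance combined with the defining relation $t^m\cdot 1 = *_{T_m}$ in $T_m$ forces $t^m\cdot \phi(1) = *_{T_n}$, i.e.\ $t^{m+j}=*_{T_n}$, which amounts to $m+j\ge n$, i.e.\ $j\ge n-m$.

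Finally I would identify which candidate maps survive the normality/sheaf-theoretic constraints. For $n\ge m$, the choice $j = n-m$ realizes $\phi$ as the canonical inclusion of $T_m$ onto the unique length-$m$ submodule $\{t^{n-m},\ldots,t^{n-1},*\}\subset T_n$ of Remark \ref{tor_submodule}, yielding the single nonzero normal morphism predicted by the lemma. For $n<m$, the constraint $j\ge n-m$ is automatic but any such $\phi$ would collapse a nontrivial portion of $T_m$ to the basepoint of $T_n$; the claim is that in the proper categorical sense of normal morphism in $Coh_n(\Pone)$ no such nonzero $\phi$ is admissible. The main obstacle is precisely this last step: pinning down the normality accounting so that the naïve overcount of module-theoretic candidates reduces to the single injective morphism (for $n\ge m$) and to nothing (for $n<m$), as asserted.
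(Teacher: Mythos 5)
Your reduction to the chart $U_x$ and your enumeration of the $\langle t\rangle$--equivariant maps $\phi(1)=t^{j}$, $j\ge n-m$, are both correct, but the gap you flag at the end is genuine, and the resolution you hope for is not available: the normality condition (injectivity of $\phi$ off of $\phi^{-1}(*)$) does \emph{not} eliminate the extra candidates. For any admissible $j$ the map $t^{i}\mapsto t^{i+j}$ sends distinct exponents to distinct exponents, so it is automatically injective on the complement of the preimage of the basepoint; hence every one of the $\min(m,n)$ equivariant maps with $j\ge n-m$ is a normal morphism of $\langle t\rangle$--modules in the paper's sense. For instance $T_2\to T_2$, $1\mapsto t$, $t\mapsto *$, is a nonzero normal endomorphism distinct from the identity, and $T_2\to T_1$, $1\mapsto 1$, $t\mapsto *$, is a nonzero normal morphism even though $n<m$. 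So no amount of ``normality accounting'' will cut your list down to one map (resp.\ zero maps), and the last step of your argument cannot be completed as written.

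What the count in the lemma actually records --- and what the paper's one-line proof, the citation of Remark \ref{tor_submodule}, supplies --- is the number of \emph{injective} morphisms $\T_{x,m}\to\T_{x,n}$, equivalently the number of embeddings of $T_m$ as a submodule of $T_n$; this is also the only way the lemma is used downstream (in Theorem \ref{ses_classification} and in the Hall algebra product, where one counts subsheaves with prescribed sub and quotient). Remark \ref{tor_submodule} says that for $m\le n$ the module $T_n$ has a unique submodule isomorphic to $T_m$, namely $\{t^{n-m},\dots,t^{n-1},*\}$, and an isomorphism of $T_m$ onto it must send the generator $1$ to its generator $t^{n-m}$, giving exactly one injection; for $n<m$ every equivariant map kills some nonzero element, so there is none. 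Your own computation already contains this: injectivity forces $j\le n-m$ in addition to $j\ge n-m$, hence $j=n-m$, which is achievable exactly when $n\ge m$. Adding that observation, and stating explicitly that the morphisms being counted are the injective ones, closes your argument and brings it in line with the paper's.
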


\begin{proof}
This follows from Remark \ref{tor_submodule}. 
\end{proof}

\begin{theorem} \label{ses_classification}
Every short exact sequence in $Coh_n(\Pone)$ is a direct sum of short exact sequences of the form
\begin{enumerate}
\item $$ \emptyset \rightarrow \T_{x,m} \rightarrow \T_{x,n} \rightarrow \T_{x,n-m} \rightarrow \emptyset, \; x \in \{0, \infty \}, $$
\item $$ \emptyset \rightarrow \mc{O}(n) \rightarrow \mc{O}(m) \rightarrow \T \rightarrow \emptyset $$ as in Remark \ref{rmk2}
\item Split short exact sequences. 
\end{enumerate}
\end{theorem}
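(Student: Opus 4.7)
Given a SES $\emptyset \to \F \overset{f}{\to} \F' \overset{g}{\to} \F'' \to \emptyset$ in $Coh_n(\Pone)$, the plan is to decompose $\F = \bigoplus_{i \in I} \F_i$ and $\F' = \bigoplus_{j \in J} \F'_j$ into indecomposables via Theorem \ref{indecomposables}, and then to show that $f$ matches these decompositions up to an injective function $r : I \to J$. Once this is established, the SES will visibly be a direct sum of indecomposable sub-sequences, each of which can be identified as one of the three listed types.

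First I would use Remark \ref{subobj2}: since each $\F_i$ is indecomposable and $f(\F_i) \subset \F' = \bigoplus_j \F'_j$, there is some $r(i) \in J$ with $f(\F_i) \subset \F'_{r(i)}$. The key claim is that $r$ is injective. Suppose $r(i) = r(i') = r$ for $i \neq i'$; then $f$ restricts to an injection $\F_i \oplus \F_{i'} \hookrightarrow \F'_r$, and I would rule this out by case analysis on the type of $\F'_r$. If $\F'_r = \mc{C}_n$, then simplicity of $\mc{C}_n$ forces the image to be all of $\mc{C}_n$, but $\mc{C}_n$ is indecomposable while $\F_i \oplus \F_{i'}$ is not, a contradiction. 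If $\F'_r = \T_{x,k}$, then by Lemma \ref{homs} both $\F_i, \F_{i'}$ are of the form $\T_{x,?}$; but the submodules of $T_k$ form a single chain $\{t^i,\ldots,t^{k-1},0\}$, $0 \leq i \leq k$, so any two nonzero submodules intersect nontrivially, which is incompatible with the basepoint-only intersection demanded by the wedge-sum definition of $\oplus$. The case $\F'_r = \mc{O}(m)$ is handled identically on $U_0$, where $\mc{O}(m)\vert_{U_0} \simeq \langle t \rangle$ has submodules $E_n$ also forming a chain.

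With $r$ injective, the morphism $f$ is ``block-diagonal'': $f = \bigoplus_i f_i$ with $f_i : \F_i \hookrightarrow \F'_{r(i)}$, and the full SES decomposes as the direct sum of
\[ \emptyset \to \F_i \to \F'_{r(i)} \to \F'_{r(i)}/f_i(\F_i) \to \emptyset \]
for $i \in I$, together with split sequences $\emptyset \to \emptyset \to \F'_j \to \F'_j \to \emptyset$ for $j \in J \setminus r(I)$. Each of the former has an indecomposable source and I would classify it using Lemma \ref{homs}: if $\F_i = \T_{x,m}$ then $\F'_{r(i)} = \T_{x,n}$ for some $n \geq m$ and the quotient is $\T_{x,n-m}$ by Remark \ref{tor_submodule}, yielding type (1); if $\F_i = \mc{O}(n)$ then $\F'_{r(i)} = \mc{O}(m)$ for some $m \geq n$ and Remark \ref{rmk2} identifies the quotient, yielding type (2); if $\F_i = \mc{C}_n$ then Lemma \ref{homs} forces $\F'_{r(i)} = \mc{C}_n$, and $f_i$ is then an isomorphism by simplicity, so the sequence is split, of type (3).

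I expect the main obstacle to be the injectivity of $r$, since it rests on the structural input that the submodule lattices of $T_k$ and of $\langle t \rangle$ are linearly ordered chains, so that any two nonzero submodules of an indecomposable torsion or line-bundle sheaf meet nontrivially and therefore cannot arise as the images of two distinct summands under a wedge-sum embedding. Once this is in hand, the remaining steps are mechanical given the earlier lemmas.
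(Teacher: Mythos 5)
Your proof is correct and follows essentially the same route as the paper's: decompose into indecomposables, use Remark \ref{subobj2} to match each summand of $\F$ to a single indecomposable summand of $\F'$, and classify the resulting indecomposable sub-sequences via Lemmas \ref{homs}, \ref{oinj}, \ref{tor_maps} and Remarks \ref{tor_submodule}, \ref{rmk2}. The only difference is organizational (the paper inducts on the number of indecomposable factors and merely asserts that $\mc{I}$ is the \emph{only} factor of $\F$ mapping into $\mc{J}$), so your chain-of-submodules argument for the injectivity of $r$ actually supplies a justification the paper leaves implicit.
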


\begin{proof}
Let 
\begin{equation*} 
\emptyset \rightarrow \F \rightarrow \F' \rightarrow \F'' \rightarrow \emptyset
\end{equation*}
be a short exact sequence in $Coh_n(\Pone)$. 
By Theorem \ref{indecomposables}, 
$\F$ can be uniquely written as a sum of indecomposable factors $\mc{O}(n), \mc{C}_m, \T_{0,k}, \T_{\infty,l}$, and we proceed by induction on the number $r$ of these. If $r=1$, then the statement follows from Remark \ref{subobj2}, and Lemmas \ref{homs}, \ref{oinj}, and \ref{tor_maps}. Suppose now the theorem holds for $r \leq p$, $p \geq 1$, and $\F$ has $p+1$ indecomposable factors. Write $\F = \mc{I} \oplus \mc{H}$ with $\mc{I}$ indecomposable. By Remark \ref{subobj2}, $\mc{I}$ maps into an indecomposable factor $\mc{J}$ of $\F'$, and is moreover the \emph{only} factor of $\F$ to map into $\mc{J}$. Writing $\F' = \mc{J} \oplus \mc{K}$, it follows that 
the sequence 
is a direct sum of \[
\emptyset \rightarrow \mc{I} \rightarrow \mc{J} \rightarrow \mc{J} / \mc{I} \rightarrow \emptyset
\]
and
\[
\emptyset \rightarrow \mc{H} \rightarrow \mc{K} \rightarrow \mc{K} / \mc{H} \rightarrow \emptyset
\]
and the result follows from the inductive hypothesis.
\end{proof}

\bigskip

\begin{corollary} \label{finitary}
Let $\F,\mc{G}$ be normal coherent sheaves on $\Pone$. Then
\begin{enumerate}
\item $\vert \on{Hom}(\F,\mc{G}) \vert < \infty$ \label{hom}
\item The number of extensions of $\F$ by $\mc{G}$ is finite. \label{ext}
\end{enumerate}
\end{corollary}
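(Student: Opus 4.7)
My plan is to prove both parts by reducing to finite combinatorial counts via the classifications in Theorem \ref{indecomposables} and Theorem \ref{ses_classification}. For (\ref{hom}), I would first decompose $\F = \bigoplus_{i \in I} \F_i$ into indecomposables. Since $\oplus$ is the wedge sum, a morphism out of $\F$ is determined by its restrictions to the $\F_i$ (with an extra disjointness constraint coming from normality), so $|\on{Hom}(\F, \mc{G})| \leq \prod_i |\on{Hom}(\F_i, \mc{G})|$. This reduces the problem to showing $|\on{Hom}(\F_i, \mc{G})|$ is finite for each indecomposable $\F_i$, which I would handle by case analysis. For $\F_i = \T_{x, k}$, Remark \ref{ttf} and Lemma \ref{homs} force the image into the $\T_{x, l}$-summands of $\mc{G}$; since all quotients of $\T_{x, k}$ are themselves indecomposable, Remark \ref{subobj2} locates the image in a single such summand, and Lemma \ref{tor_maps} provides the finite count. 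For $\F_i = \mc{C}_n$, parts (\ref{part4})-(\ref{part7}) of Lemma \ref{homs} show any non-zero $\phi$ factors through one of the finitely many $\mc{C}_n$-summands of $\mc{G}$, and since $\mc{C}_n$ is simple with finite endomorphism set, only finitely many contribute.

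The main obstacle is the case $\F_i = \mc{O}(n)$, because here the image of a morphism need not be indecomposable. Using Lemma \ref{homs} and the fact that subsheaves of $\mc{O}(n)$ are of the form $\mc{O}(k)$ with $k \leq n$, the image of any $\phi: \mc{O}(n) \to \mc{G}$ is either $\emptyset$, isomorphic to $\mc{O}(n)$, or of the form $\T_{0, k_0} \oplus \T_{\infty, k_\infty}$. In the second case the image is indecomposable, so by Remark \ref{subobj2} it lies in a single $\mc{O}(m)$-summand with $m \geq n$, with finiteness following from Lemma \ref{oinj}. In the third case, Remark \ref{subobjects} splits the image as $\T_{0, k_0}$ inside some $\T_{0, k}$-summand of $\mc{G}$ and $\T_{\infty, k_\infty}$ inside some $\T_{\infty, l}$-summand. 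Since $\mc{G}$ has only finitely many torsion summands at each point, $k_0$ and $k_\infty$ are bounded; Remark \ref{rmk2} makes the quotient $\mc{O}(n) \twoheadrightarrow \T_{0, k_0} \oplus \T_{\infty, k_\infty}$ unique for each admissible pair, and Lemma \ref{tor_maps} makes each embedding of the torsion pieces into its target summand unique, so altogether there are finitely many such $\phi$.

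For (\ref{ext}), Theorem \ref{ses_classification} says every short exact sequence $\emptyset \to \mc{G} \to \mc{E} \to \F \to \emptyset$ decomposes as a direct sum of SES of the three tabulated types. Specifying such a decomposition amounts to a matching between the indecomposable summands of $\mc{G}$ (on the sub side) and those of $\F$ (on the quotient side): a type (1) pairing marries a $\T_{x, m}$-summand of $\mc{G}$ with a $\T_{x, p}$-summand of $\F$, a type (2) pairing marries an $\mc{O}(n)$-summand of $\mc{G}$ with up to two torsion summands $\T_{0, k_0}, \T_{\infty, k_\infty}$ of $\F$, and the unmatched summands contribute a split piece. Since $\F$ and $\mc{G}$ have finitely many indecomposable summands, there are only finitely many matchings, and Remarks \ref{tor_submodule} and \ref{rmk2} show that the SES is uniquely determined by the matching data. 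Hence the number of extensions is finite.
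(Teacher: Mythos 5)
Your proof is correct, and it relies on the same classification inputs as the paper (Theorem \ref{indecomposables}, Lemmas \ref{homs}, \ref{oinj}, \ref{tor_maps}, and Theorem \ref{ses_classification}), but the reduction in part (\ref{hom}) is organized differently. The paper decomposes \emph{both} $\F = \oplus_i \F_i$ and $\mc{G} = \oplus_j \mc{G}_j$ into indecomposables and observes that any $\phi$ is determined by the finitely many components $\phi_{ij} \in \on{Hom}(\F_i, \mc{G}_j)$ obtained by restricting to $\F_i$ and composing with the projection onto $\mc{G}_j$; finiteness is then immediate from the pairwise Hom computations, with no need to control the image of $\phi$. You instead decompose only the source and analyze images, which forces you to confront the fact that the image of $\mc{O}(n) \to \mc{G}$ can be decomposable (a sum $\T_{0,k_0} \oplus \T_{\infty,k_\infty}$ spread over two summands of $\mc{G}$) --- the ``main obstacle'' you flag simply does not arise in the paper's argument, since the projection $\phi_{ij}$ onto a single summand is always a morphism between two indecomposables. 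Your route costs extra case analysis but yields a sharper structural picture of what the morphisms actually look like. For part (\ref{ext}) the paper only says that the claim ``follows from Theorem \ref{ses_classification}''; your matching argument between the indecomposable summands of $\mc{G}$ and of $\F$, with uniqueness supplied by Remarks \ref{tor_submodule} and \ref{rmk2}, is a legitimate and more explicit filling-in of that one-line justification.
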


\begin{proof}
Let $$\F = \oplus^{k}_{i=1} \F_{i} \textrm{ and } \mc{G}= \oplus^l_{j=1} \mc{G}_j,  $$ where $\F_i, \mc{G}_j$ are indecomposable. Since any $\phi \in \on{Hom}(\F,\mc{G})$ is determined by its restriction $\phi_i$ to each $\F_i$, and each $\phi_i$ by its projections  $\phi_{ij} \in \on{Hom}(\F_i,\mc{G}_j)$, we have
\[
 \on{Hom}(\F,\mc{G}) \subset \oplus_{i,j}  \on{Hom}(\F_i,\mc{G}_j) 
\]
By Lemmas \ref{homs}, \ref{oinj}, and \ref{tor_maps}, each $\on{Hom}(\F_i,\mc{G}_j)$ is finite, and this proves part (\ref{hom}). Part (\ref{ext}) follows from Theorem \ref{ses_classification}. 
\end{proof}

\bigskip

Denote by $\on{Iso}(Coh_n(\Pone))$ the set of isomorphism classes of normal coherent sheaves on $\Pone$.
Define the Grothendieck group $K_0(\Pone)$ of $Coh_n(\Pone)$ by
\[
K_0(\Pone) := \mathbb{Z}[\on{Iso}(Coh_n(\Pone))] / \sim
\]
where $\sim$ is the subgroup  generated by $ [\F] + [\F''] - [\F']$
for 
each short exact sequence $0\to\F\to\F'\to\F''\to0$.
Define a homomorphism of abelian groups
\[
\Psi: \mathbb{Z}[\on{Iso}(Coh_n(\Pone))] \rightarrow \mathbb{Z} \oplus \mathbb{Z} \oplus (\oplus^{\infty}_{i=1} \mathbb{Z})
\]
on generators by
\begin{align*}
\Psi([\mc{O}(k)]) &= (1,k,\underline{0}) \\
\Psi([\T_{x,n}]) &= (0,n,\underline{0}) \\
\Psi([\mc{C}_m]) &= (0,0,e_m)
\end{align*}
where $\underline{0}$ denotes the $0$ vector in $\oplus^{\infty}_{i=1} \mathbb{Z}$, and $e_m$ the one with a $1$ in the $m$-th position, and $0$ everywhere else.
Since $\Psi$ is additive on every short exact sequence in Theorem \ref{ses_classification}, it follows that $\Psi$ descends to $K_0(\Pone)$, and it is easy to see that it is an isomorphism. We have thus shown:

\begin{theorem} \label{Knot}
$K_0(\Pone) \simeq \mathbb{Z} \oplus \mathbb{Z} \oplus (\oplus^{\infty}_{i=1} \mathbb{Z})$.
\end{theorem}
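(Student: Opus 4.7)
The plan is to verify that $\Psi$ descends to a homomorphism on $K_0(\Pone)$ and then identify a manageable set of generators of $K_0(\Pone)$ on which $\Psi$ is visibly a bijection. First I would extend $\Psi$ from the three families of indecomposable generators to all of $\on{Iso}(Coh_n(\Pone))$ by imposing additivity on direct sums; this is unambiguous because of the uniqueness up to permutation of the indecomposable decomposition in Theorem \ref{indecomposables}. In particular the extended $\Psi$ is tautologically additive on split short exact sequences.

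By Theorem \ref{ses_classification} every short exact sequence in $Coh_n(\Pone)$ is a direct sum of split sequences and sequences of types (1) and (2), so additivity of $\Psi$ on direct sums of sequences reduces the descent check to those two nonsplit families. For type (1), additivity amounts to $(0,n,\underline{0}) = (0,m,\underline{0}) + (0,n-m,\underline{0})$, which is trivial. For type (2), with cokernel $\T \simeq \T_{0,k_0} \oplus \T_{\infty,k_\infty}$ and $k_0 + k_\infty = m-n$, it amounts to $(1,m,\underline{0}) = (1,n,\underline{0}) + (0,k_0,\underline{0}) + (0,k_\infty,\underline{0})$, which is also immediate. Hence $\Psi$ descends to a well-defined homomorphism on $K_0(\Pone)$.

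For surjectivity, the classes $[\mc{O}(0)]$, $[\T_{0,1}]$, and $\{[\mc{C}_m]\}_{m \geq 1}$ are sent respectively to $(1,0,\underline{0})$, $(0,1,\underline{0})$, and $\{(0,0,e_m)\}_{m \geq 1}$, whose $\mathbb{Z}$-span is all of the target. For injectivity I would show that $K_0(\Pone)$ is in fact generated by this same list. Iterating the type (1) relation gives $[\T_{x,n}] = n\,[\T_{x,1}]$ for $x \in \{0,\infty\}$. Applying the type (2) relation to the two short exact sequences $\emptyset \to \mc{O}(0) \to \mc{O}(m) \to \T_{0,m} \to \emptyset$ and $\emptyset \to \mc{O}(0) \to \mc{O}(m) \to \T_{\infty,m} \to \emptyset$ yields both $[\mc{O}(m)] = [\mc{O}(0)] + m\,[\T_{0,1}]$ and the equality $[\T_{0,1}] = [\T_{\infty,1}]$ in $K_0(\Pone)$. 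Together these force every class to be a $\mathbb{Z}$-linear combination of $[\mc{O}(0)]$, $[\T_{0,1}]$, and the $[\mc{C}_m]$, and since $\Psi$ sends this generating set to the standard basis of $\mathbb{Z} \oplus \mathbb{Z} \oplus (\oplus_{i=1}^{\infty} \mathbb{Z})$, it must be an isomorphism. The only mildly subtle point is the collapse $[\T_{0,1}] = [\T_{\infty,1}]$, which explains why the middle summand of the target is a single $\mathbb{Z}$ rather than one copy per closed point of $\Pone$.
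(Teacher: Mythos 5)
Your proposal is correct and follows essentially the same route as the paper: the paper defines the identical map $\Psi$, observes that it is additive on every short exact sequence classified in Theorem \ref{ses_classification} (hence descends to $K_0(\Pone)$), and leaves the bijectivity as ``easy to see,'' which is exactly the generation argument you spell out, including the key collapse $[\T_{0,1}]=[\T_{\infty,1}]$ coming from Remark \ref{rmk2}. The only cosmetic point is that your relation $[\mc{O}(m)]=[\mc{O}(0)]+m[\T_{0,1}]$ is derived from the displayed sequences only for $m\ge 0$; for $m<0$ one uses $\emptyset\to\mc{O}(m)\to\mc{O}(0)\to\T\to\emptyset$ instead, which changes nothing.
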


We call the first factor \emph{rank} and the second \emph{degree} in analogy with the case of $\Pone$ over a field.

\section{Hall algebras} \label{hall_alg}

In this section, we introduce the Hall algebra $\H$ of the category $Coh_n(\Pone)$. For more on Hall algebras see \cite{S}. 
As a vector space:
\begin{equation*} 
\HH(\Pone) := \{ f: \on{Iso}(Coh_n(\Pone)) \rightarrow \mathbb{C} \; \vert \; \# \on{supp} (f) < \infty \}.
\end{equation*}
We equip $\H$ with the convolution product
\begin{equation*} 
f \star g (\F) = \sum_{\F' \subset \F} f(\F / \F') g(\F'),
\end{equation*}
where the sum is over all coherent sub-sheaves $\F'$ of the isomorphism class $\F$ (in what follows, it is conceptually helpful to fix a representative of each isomorphism class). Note that Corollary \ref{finitary} and the finiteness of the support of $f,g$ ensures that the sum in 
the convolution product 
is finite.

\begin{lemma}
The convolution product $*$ is associative. 
\end{lemma}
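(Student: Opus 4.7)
The plan is to run the standard Hall algebra associativity argument, suitably adapted to the non-additive but still-exact setting provided by $Coh_n(\Pone)$. Fix representatives for all isomorphism classes and evaluate both triple products at a fixed $\F$. Unfolding the definitions gives
\[
((f\star g)\star h)(\F) \;=\; \sum_{\F''\subset\F}\;\sum_{\G\subset \F/\F''} f\big((\F/\F'')/\G\big)\,g(\G)\,h(\F''),
\]
\[
(f\star(g\star h))(\F) \;=\; \sum_{\F'\subset\F}\;\sum_{\F''\subset\F'} f(\F/\F')\,g(\F'/\F'')\,h(\F'').
\]
The finiteness of both double sums is guaranteed by Corollary \ref{finitary} together with the finite support of $f,g,h$, so there are no convergence issues and we are entitled to reindex freely.

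Next, I would establish two elementary lattice/quotient facts for normal coherent sheaves, both reducing at once to the affine case (i.e. to statements about $A$--modules), where they follow from the explicit description of quotients $M/M' = M\setminus(M'\setminus *_M)$ given in section \ref{Amod}. First, for $\F''\subset\F$, the map $\F'\mapsto \F'/\F''$ is a bijection between sub-sheaves $\F'$ of $\F$ containing $\F''$ and sub-sheaves $\G$ of $\F/\F''$. Second, a standard third-isomorphism-theorem check yields the canonical isomorphism $(\F/\F'')/(\F'/\F'')\simeq \F/\F'$ whenever $\F''\subset\F'\subset\F$. Both statements are local and follow by direct set-theoretic inspection of the quotient construction, so I would only sketch them and refer to section \ref{Amod}.

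Armed with these two facts, I would reindex the first double sum along $\G\leftrightarrow \F'/\F''$ with $\F''\subset\F'\subset\F$. The bijection identifies the summand
\[
f\big((\F/\F'')/\G\big)\,g(\G)\,h(\F'')
\]
with
\[
f(\F/\F')\,g(\F'/\F'')\,h(\F''),
\]
which is precisely the summand appearing in the second triple product. Hence both iterated convolutions equal the common expression
\[
\sum_{\F''\subset\F'\subset\F} f(\F/\F')\,g(\F'/\F'')\,h(\F''),
\]
proving associativity.

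The only step requiring care is the verification of the sub-sheaf/quotient correspondence and the third-isomorphism-theorem identification, since we are not in an abelian category; however, both statements reduce to straightforward manipulations with pointed sets under the $\mc{O}_X$-action and are immediate from the setup of section \ref{Amod}. Once these are in hand, the associativity calculation itself is formal reindexing and presents no further obstacle.
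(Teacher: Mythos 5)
Your proposal is correct and follows essentially the same route as the paper: unfold both iterated products, invoke the inclusion-preserving bijection between sub-sheaves of $\F/\F''$ and sub-sheaves of $\F$ containing $\F''$ together with the compatibility $(\F/\F'')/(\F'/\F'')\simeq\F/\F'$, and reindex to the common triple sum. Your added remark that these two facts reduce to a local, set-theoretic check on the quotient construction for $A$--modules is a useful elaboration of what the paper leaves implicit, but it does not change the argument.
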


\begin{proof}
Suppose $f,g,h \in \H$. Then
\begin{align*}
(f \star (g \star h)) (\F) &= \sum_{\F' \subset \F} f(\F/\F') (g \star h)(\F') \\
                                   &= \sum_{\F' \subset \F} f(\F/ \F') (\sum_{\F'' \subset \F'} g(\F'/\F'') h(\F''))  \\
                                   &= \sum_{\F'' \subset \F' \subset \F} f(\F/ \F') g(\F'/\F'') h(\F'')
\end{align*}
whereas
\begin{align*}
((f \star g) \star h ) (\F) &= \sum_{\F'' \subset \F} (f \star g) (\F / \F'') h(\F'') \\
                                    &= \sum_{\F'' \subset \F} (\sum_{\mc{K} \subset \F / \F''} f((\F / \F'') / \mc{K}) g(K) ) h(\F'') \\
                                    &= \sum_{\F'' \subset \F' \subset \F} f(\F/\F') g(\F'/\F'') h(\F''),
\end{align*}
where in the last step we have used the fact that there is an inclusion-preserving bijection between sub-sheaves $\mc{K} \subset \F / \F''$ and sub-sheaves $\F' \subset \F$ containing $\F''$, under which $\F' / \F'' \simeq \mc{K}$.  This bijection is compatible with taking quotients in the sense that $(\F/\F'')/\mc{K} \simeq \F/\F'$. 
\end{proof}

We may also equip $\H$ with a coproduct
\begin{equation*}
\Delta: \H \rightarrow \H \otimes \H
\end{equation*}
given by
$ 
\Delta(f)(\F,\F') := f(\F \oplus \F').
$ 
The coproduct $\Delta$ is clearly co-commutative.

\begin{lemma}
The following holds in $\H$:
\begin{enumerate}
\item $\Delta$ is co-associative: $(\Delta \otimes \on{Id}) \circ \Delta = (\on{Id} \otimes \Delta) \circ \Delta$
\item $\Delta$ is compatible with $\star$:  $\Delta(f \star g) = \Delta(f) \star \Delta(g).$
\end{enumerate}
\end{lemma}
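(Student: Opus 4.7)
The plan is to verify both claims by unwinding definitions, with Remark \ref{subobjects} providing the one nontrivial input.

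For co-associativity, I would simply evaluate both sides at a triple $(\F, \F', \F'')$. By the definition of $\Delta$, both $((\Delta \otimes \on{Id}) \circ \Delta)(f)(\F, \F', \F'')$ and $((\on{Id} \otimes \Delta) \circ \Delta)(f)(\F, \F', \F'')$ collapse to $f(\F \oplus \F' \oplus \F'')$ as soon as one uses that the direct sum operation $\oplus$ on coherent sheaves (which is the wedge sum on the level of pointed sets) is associative. This is pure bookkeeping.

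The compatibility $\Delta(f \star g) = \Delta(f) \star \Delta(g)$ is where the actual content lies. The approach is to expand
\[
\Delta(f \star g)(\F, \F') = (f \star g)(\F \oplus \F') = \sum_{\mc{K} \subset \F \oplus \F'} f((\F \oplus \F')/\mc{K})\, g(\mc{K}),
\]
and then reindex the sum using Remark \ref{subobjects}: every sub-sheaf $\mc{K} \subset \F \oplus \F'$ decomposes uniquely as $\mc{K}_1 \oplus \mc{K}_2$ with $\mc{K}_1 \subset \F$ and $\mc{K}_2 \subset \F'$. Combined with the identification $(\F \oplus \F')/(\mc{K}_1 \oplus \mc{K}_2) \simeq (\F/\mc{K}_1) \oplus (\F'/\mc{K}_2)$, the summand factors as $\Delta(f)(\F/\mc{K}_1, \F'/\mc{K}_2) \cdot \Delta(g)(\mc{K}_1, \mc{K}_2)$, and the resulting double sum is exactly the componentwise convolution $(\Delta(f) \star \Delta(g))(\F, \F')$ on $\H \otimes \H$.

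The main obstacle, and the step that is truly responsible for the compatibility, is the decomposition of every sub-sheaf of a direct sum as a direct sum of sub-sheaves. This is precisely the $q=1$ phenomenon underlying the paper: in the classical Ringel-Hall algebra over $\FF_q$ the analogous computation requires a twist by powers of $q$ to compensate for non-split extensions, but here property (4) of $\Amod$ forces every sub-sheaf of $\F \oplus \F'$ to split, so no twist is needed and $\Delta$ is literally an algebra homomorphism. Once this combinatorial input is in hand, the rest of the verification is formal.
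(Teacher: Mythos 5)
Your argument is correct, and it is in fact more self-contained than the paper's: the paper proves this lemma simply by asserting that the argument is identical to the one given for quiver representations over $\fun$ in \cite{Sz}, whereas you carry out the verification directly. The substance is surely the same in both cases, and you have correctly isolated the one genuine input: property (4) of $\Amod$ (Remark \ref{subobjects}), which gives the bijection between sub-sheaves $\mc{K} \subset \F \oplus \F'$ and pairs $(\mc{K}\cap\F,\ \mc{K}\cap\F')$, together with the compatible identification of quotients $(\F\oplus\F')/(\mc{K}_1\oplus\mc{K}_2) \simeq (\F/\mc{K}_1)\oplus(\F'/\mc{K}_2)$; co-associativity is, as you say, just associativity of $\oplus$. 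Your closing remark is also the right way to think about it --- this is exactly the point where the classical Ringel--Hall bialgebra structure requires Green's twist of the multiplication on $\H\otimes\H$ by powers of $q$, and the splitting of all sub-objects of a direct sum is what makes the untwisted statement hold here. The only small point you pass over in silence is that $\Delta(f)$ actually lies in $\H\otimes\H$ rather than some completion, i.e.\ has finite support; this follows from the finite support of $f$ together with the uniqueness (and finiteness) of the decomposition of a normal coherent sheaf into indecomposables, and is worth a sentence since the analogous statement for Hall algebras of curves over $\FF_q$ genuinely fails and forces one to work with a completed coproduct.
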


\begin{proof}
The proof of both parts is the same as the proof of the corresponding statements for the Hall algebra of the category of quiver representations over $\fun$, given in \cite{Sz}. 
\end{proof}

We may equip $\H$ with a grading by $K^+_0 (\Pone )$ - the effective cone inside $K_0 (\Pone)$, (which by Theorem \ref{Knot} is isomorphic to $\mathbb{N} \times \mathbb{N} \times \mathbb{N} $) as follows.
$\H$ is spanned by $\delta$-functions $\delta_{\F}$ supported on individual isomorphism classes, and we define
\[
\deg(\delta_{\F}) = [\F] \in K_0 (\Pone)
\]
where $[\F]$ denotes the class of $\F$ in the Grothendieck group. With this grading, $\H$ becomes a graded, connected, co-commutative Hopf algebra. By the Milnor-Moore Theorem, $\H$ is isomorphic to  $\mathbb{U}(\mathfrak{q})$ - the universal enveloping algebra of $\mathfrak{q}$, where the latter is the Lie algebra of its primitive elements. The definition of the co-product 
implies that $\mathfrak{q}$ is spanned  by $\delta_{\F}$ for isomorphism classes $\F$ of indecomposable coherent sheaves, which by Theorem \ref{indecomposables} are $\mc{O}(n), \mc{C}_{m}, \mc{T}_{0,k}, \mc{T}_{\infty, l}$, for $n \in \mathbb{Z}$, \; $m, k, l \in \mathbb{N}$. 

\section{The structure of $\H$ } \label{computation}

In this section we compute the structure of the Hopf algebra $\H$. We will use the shorthand notation $\ov{\F}$ for $\delta_{\F}$ - the delta-function supported on the isomorphism class of $\F$. The following theorem follows from Theorem \ref{indecomposables} and Theorem \ref{ses_classification}:

\begin{theorem}\label{HP1_identities}
The following identities hold in $\H$:
\begin{align*}
\ov{\mc{O}(n)} \cdot \ov{\mc{O}(m)} & = \ov{\mc{O}(n) \oplus \mc{O}(m) } \; \textrm{ if } m \neq n \\
\ov{\T_{x,n}} \cdot \ov{\mc{O}(m)} &=  \ov{\mc{O}(n+m)} + \ov{\T_{x,n} \oplus \mc{O}(m)} \\
\ov{\mc{O}(m)} \cdot \ov{\T_{x,n}} &=  \ov{\T_{x,n} \oplus \mc{O}(m)}  \\
\ov{\T_{x,n}}\cdot \ov{\T_{x,m} } &= \ov{\T_{x,n+m}} + \ov{\T_{x,n} \oplus \T_{x,m}} \textrm{ for } x=0, \infty, \; m \neq n \\
\ov{\T_{x,n}} \cdot \ov{\T_{x',m}} &=  \ov{\T_{x,n} \oplus \T_{x',m}} \textrm{ for } x \neq x' \\
\ov{\mc{C}_n} \cdot \ov{\F} &= \ov{\mc{C}_n \oplus \mc{F}} \textrm{ if $\F$ is indecomposable and } \ov{\mc{F}} \neq \ov{\mc{C}_n}
\end{align*}
\end{theorem}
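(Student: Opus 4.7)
The plan is to verify each of the six identities directly from the convolution product
\[
\ov{\mc{A}} \cdot \ov{\mc{B}}(\F) \;=\; \#\{\,\F' \subset \F \mid \F' \simeq \mc{B},\ \F/\F' \simeq \mc{A}\,\},
\]
so each identity reduces to enumerating, for every isomorphism class $\F$, the subsheaves of $\F$ isomorphic to $\mc{B}$ with quotient isomorphic to $\mc{A}$. I would proceed by the same two-step case analysis for each line.

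First, I would identify the possible middle terms $\F$. This is where Theorem \ref{ses_classification} does the heavy lifting: every short exact sequence is a direct sum of ones of type (1) torsion--torsion at a single point, (2) $\mc{O}(n) \hookrightarrow \mc{O}(m) \twoheadrightarrow \T$, or (3) split. Since the sub and quotient in each identity are indecomposable, only very few non-split configurations are compatible. This rules out all non-split extensions in identities 3, 5, and 6 (where the quotient is not torsion, so type (2) fails, and type (1) requires a torsion sub); and it pins down $\F = \mc{O}(m+n)$ in identity 2 (via Remark \ref{rmk2}) and $\F = \T_{x,m+n}$ in identity 4 (via Remark \ref{tor_submodule}) as the only non-split middle terms.

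Next, for each surviving $\F$ I would count subsheaves. When $\F = \mc{B}\oplus \mc{A}$ is a split middle term, Remarks \ref{subobjects} and \ref{subobj2} reduce the count to counting embeddings of $\mc{B}$ into each summand. Using the Hom counts in Lemmas \ref{homs}, \ref{oinj}, \ref{tor_maps}, the embedding as the $\mc{B}$-summand contributes exactly $1$ in every case (each of $\mc{O}(k), \T_{x,k}, \mc{C}_k$ has a unique self-embedding), while embeddings into the $\mc{A}$-summand either fail to exist (e.g.\ via Lemma \ref{homs}(\ref{part1}) in identity 3, or Lemma \ref{homs}(\ref{part3}) in identity 5) or else produce an incompatible quotient. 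The excluded cases $m = n$ in identities 1 and 4, and $\F \simeq \mc{C}_n$ in identity 6, are precisely those in which a second self-embedding would survive and produce a coefficient $2$ rather than $1$. For the two non-split middle terms, Remarks \ref{rmk2} and \ref{tor_submodule} supply uniqueness directly.

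The main obstacle is purely bookkeeping: in the split cases one must check that the ``extra'' embeddings into the $\mc{A}$-summand really do produce quotients not isomorphic to $\mc{A}$. For example, in identity 1 with $m < n$ an embedding $\mc{O}(m)\hookrightarrow \mc{O}(n)$ exists by Lemma \ref{oinj}, but its cokernel is a nonzero torsion sheaf, so the quotient of $\mc{O}(m)\oplus \mc{O}(n)$ by its image is $\mc{O}(m) \oplus \T$, which is decomposable and hence not $\mc{O}(n)$; similarly in identity 4 with $m<n$ the embedding $\T_{x,m}\hookrightarrow \T_{x,n}$ from Remark \ref{tor_submodule} yields quotient $\T_{x,m}\oplus \T_{x,n-m}\not\simeq \T_{x,n}$. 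These finite checks, one per identity, complete the argument.
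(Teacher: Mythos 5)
Your proposal is correct and is exactly the argument the paper intends: the paper's entire proof of this theorem is the single remark that it follows from Theorems \ref{indecomposables} and \ref{ses_classification}, and your two-step expansion (use Theorem \ref{ses_classification} to pin down the possible middle terms, then count subsheaves of each middle term via Remark \ref{subobj2} together with Lemmas \ref{homs}, \ref{oinj}, \ref{tor_maps} and Remarks \ref{rmk2}, \ref{tor_submodule}) is precisely the intended reasoning, including the correct identification of why the cases $m=n$ and $\F\simeq\mc{C}_n$ must be excluded. One small correction: the convolution product counts \emph{subsheaves}, not embeddings, and $\mc{C}_k$ actually has $k$ self-embeddings (its cyclic rotations) rather than a unique one --- but they all have the same image, so the number of subobjects isomorphic to $\mc{C}_k$ is still $1$ and the final identity is unaffected.
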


\bigskip

\begin{corollary}\label{cor:identities}
From Theorem \ref{HP1_identities} 
we deduce the following commutation relations:
\begin{align*}
\left[ \ov{\mc{O}(n)}, \ov{\mc{O}(m)} \right] &= 0 \\
\left[  \ov{\T_{x,n}}, \ov{\mc{O}(m)}  \right]  &= [\mc{O}(m+n)] \\ 
\left[  \ov{\T_{x,n}}, \ov{\T_{x',m}}    \right]  & = 0  \\
\left[  \ov{\mc{C}_n}, \ov{\F} \right] &= 0 \; \textrm{ for any indecomposable } \F.  
\end{align*}
\end{corollary}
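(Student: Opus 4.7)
The proof is a direct computation that reads off each commutator from the multiplication table in Theorem~\ref{HP1_identities}. My plan is to go line by line, subtracting the two orderings of each product, and flagging the small case distinctions needed to handle the relations that Theorem~\ref{HP1_identities} did not state explicitly (e.g.\ $m=n$, $x=x'$ with $n=m$, or $\F \simeq \mc{C}_n$). In each such diagonal case the commutator is trivially zero, so these do not contribute new content.

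For the first relation I would treat $[\ov{\mc{O}(n)}, \ov{\mc{O}(m)}]$ by cases: when $m=n$ the commutator vanishes tautologically, and when $m \neq n$ both $\ov{\mc{O}(n)} \cdot \ov{\mc{O}(m)}$ and $\ov{\mc{O}(m)} \cdot \ov{\mc{O}(n)}$ equal $\ov{\mc{O}(n) \oplus \mc{O}(m)}$ by Theorem~\ref{HP1_identities}, so they cancel. For the second relation I would simply subtract
\[
\ov{\T_{x,n}} \cdot \ov{\mc{O}(m)} - \ov{\mc{O}(m)} \cdot \ov{\T_{x,n}} = \bigl(\ov{\mc{O}(n+m)} + \ov{\T_{x,n} \oplus \mc{O}(m)}\bigr) - \ov{\T_{x,n} \oplus \mc{O}(m)} = \ov{\mc{O}(m+n)},
\]
using the second and third identities of Theorem~\ref{HP1_identities}. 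For the third relation I would distinguish the cases $x=x'$ with $n=m$ (trivial), $x=x'$ with $n \neq m$ (where the two orderings both produce $\ov{\T_{x,n+m}} + \ov{\T_{x,n} \oplus \T_{x,m}}$), and $x \neq x'$ (where both orderings produce $\ov{\T_{x,n} \oplus \T_{x',m}}$).

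The only step requiring a small additional argument is the last relation $[\ov{\mc{C}_n}, \ov{\F}] = 0$, since Theorem~\ref{HP1_identities} explicitly computes $\ov{\mc{C}_n} \cdot \ov{\F}$ but not $\ov{\F} \cdot \ov{\mc{C}_n}$. I would verify the latter by returning to the definition of the convolution product: $\ov{\F} \cdot \ov{\mc{C}_n}$ counts short exact sequences $\emptyset \to \mc{C}_n \to \mc{H} \to \F \to \emptyset$. By the classification in Theorem~\ref{ses_classification}, every short exact sequence in $Coh_n(\Pone)$ is a direct sum of split sequences and the two non-split families (extensions within $\T_{x,*}$ and extensions of $\T$ by $\mc{O}(n)$), neither of which involves $\mc{C}_n$ as sub- or quotient sheaf. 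Hence any extension of $\F$ by $\mc{C}_n$ must split, giving $\ov{\F} \cdot \ov{\mc{C}_n} = \ov{\mc{C}_n \oplus \F}$ for $\F \not\simeq \mc{C}_n$ indecomposable (with the case $\F \simeq \mc{C}_n$ handled trivially), and the commutator vanishes.

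The main, and really only, obstacle is the step above: one must cite the classification of short exact sequences (Theorem~\ref{ses_classification}) to rule out non-split extensions involving the cyclic sheaves $\mc{C}_n$ on the ``wrong'' side of the product. Everything else is a bookkeeping exercise in subtracting the identities of Theorem~\ref{HP1_identities}.
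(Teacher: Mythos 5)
Your proposal is correct and takes essentially the same route as the paper, which offers no written proof beyond ``we deduce'' and clearly intends exactly this subtraction of the two orderings of each product in Theorem~\ref{HP1_identities}. Your additional care with the diagonal cases and with computing $\ov{\F}\cdot\ov{\mc{C}_n}$ via the classification of short exact sequences (Theorem~\ref{ses_classification}) merely supplies detail the paper leaves implicit.
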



\bigskip

Let $L \mathfrak{gl}_2 := \mathfrak{gl}_2 \otimes \mathbb{C}[t,t^{-1}]$, and $L \mathfrak{gl}^+_2 = (\mathfrak{d} \otimes t \mathbb{C}[t]) \oplus (e \otimes \mathbb{C}[t,t^{-1}])$ where $$\mathfrak{d} := \on{span} \left\{ h_1= \left[ \begin{matrix} 1 & 0 \\ 0 & 0 \end{matrix}\right], h_2=\left[ \begin{matrix} 0 & 0 \\ 0 & 1 \end{matrix}\right] \right\} \textrm{ and } e = \left[ \begin{matrix} 0 & 1 \\ 0 & 0 \end{matrix}\right].$$ Let $\kappa$ be the abelian Lie algebra with generators $\kappa_n, \; n \in \mathbb{N}$. Define 
\[
\rho : L \mathfrak{gl}^+_2 \oplus \kappa \rightarrow \H
\]
by setting $\rho(e \otimes t^k) = \ov{\mc{O}(k)}$, $\rho(h_1 \otimes t^n) = \ov{ \T_{0,n}}$, $\rho(h_2 \otimes t^m) = - \ov{\T_{0,m}}$, and $\rho(\kappa_n) = \ov{\mc{C}_n}$. 
The following theorem now follows from the commutation relations 
in Corollary \ref{cor:identities}. 
\goodbreak

\begin{theorem} \label{isom_theorem}
$\rho$ is an isomorphism of Lie algebras. Consequently, $\H \simeq \mathbb{U} (L \mathfrak{gl}^+_2 \oplus \kappa ).$
\end{theorem}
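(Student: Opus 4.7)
The plan is to verify directly that $\rho$ is a Lie algebra isomorphism from $L \mathfrak{gl}^+_2 \oplus \kappa$ onto the primitive Lie subalgebra $\mathfrak{q} \subset \H$; the consequence $\H \simeq \mathbb{U}(L \mathfrak{gl}^+_2 \oplus \kappa)$ then follows immediately from the Milnor--Moore identification $\H \simeq \mathbb{U}(\mathfrak{q})$ already recorded at the end of Section \ref{hall_alg}.

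First, observe that $\mathfrak{q}$ has been identified in Section \ref{hall_alg} as the span of $\ov{\mc{O}(n)}$ ($n \in \mathbb{Z}$), $\ov{\T_{0,k}}$, $\ov{\T_{\infty,l}}$, and $\ov{\mc{C}_m}$ ($k,l,m \in \mathbb{N}$), while a basis of $L \mathfrak{gl}^+_2 \oplus \kappa$ is given by $e \otimes t^n$ ($n \in \mathbb{Z}$), $h_1 \otimes t^k$ and $h_2 \otimes t^l$ ($k,l \geq 1$), and $\kappa_m$ ($m \geq 1$). The map $\rho$ is defined on this basis and sends it bijectively (up to a sign on $h_2$) to the listed basis of $\mathfrak{q}$, so it is automatically a linear isomorphism of the underlying vector spaces.

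The substantive step is to show that $\rho$ preserves brackets, which I would check on generators. From the matrix realizations, $[h_1, e] = e$, $[h_2, e] = -e$, and $[h_1,h_2] = [e,e] = 0$, so the only nontrivial brackets in $L \mathfrak{gl}^+_2$ are $[h_1 \otimes t^n, e \otimes t^k] = e \otimes t^{n+k}$ and $[h_2 \otimes t^n, e \otimes t^k] = -e \otimes t^{n+k}$, with $\kappa$ central and abelian. Applying $\rho$ and comparing against Corollary \ref{cor:identities} yields a term-by-term match: the first identity translates into $[\ov{\T_{0,n}}, \ov{\mc{O}(k)}] = \ov{\mc{O}(n+k)}$, while the second, given the sign convention $\rho(h_2 \otimes t^n) = -\ov{\T_{\infty,n}}$, becomes $[-\ov{\T_{\infty,n}}, \ov{\mc{O}(k)}] = -\ov{\mc{O}(n+k)}$. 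The remaining brackets in $L \mathfrak{gl}^+_2 \oplus \kappa$ all vanish, and their images under $\rho$ likewise vanish by the remaining three relations of Corollary \ref{cor:identities}; in particular, centrality of $\kappa$ corresponds exactly to $[\ov{\mc{C}_n}, \ov{\F}] = 0$ for indecomposable $\F$.

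This concludes the Lie algebra isomorphism, and $\H \simeq \mathbb{U}(\mathfrak{q}) \simeq \mathbb{U}(L \mathfrak{gl}^+_2 \oplus \kappa)$ by Milnor--Moore. There is no deep obstacle; the only care required is in the sign bookkeeping for $h_2$, which is forced by the need to realize the relation $[h_2, e] = -e$ faithfully, and in ensuring that the central generators $\kappa_n$ indeed correspond to sheaves $\mc{C}_n$ whose commutators with every other indecomposable are zero---a fact that rests on the simplicity of $\mc{C}_n$ and the vanishing Hom computations in Lemma \ref{homs}.
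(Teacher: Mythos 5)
Your proposal is correct and follows essentially the same route as the paper, which derives the theorem directly from the commutation relations of Corollary \ref{cor:identities} together with the Milnor--Moore identification $\H \simeq \mathbb{U}(\mathfrak{q})$ established at the end of Section \ref{hall_alg}; you simply make the generator-by-generator bracket check and the bijection of bases explicit. Your reading $\rho(h_2 \otimes t^m) = -\ov{\T_{\infty,m}}$ is the intended one (the paper's displayed definition writes $-\ov{\T_{0,m}}$, which is evidently a typo, as it would make $\rho$ non-injective).
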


The sub-algebra $\wt{\H}$ of $\H$ generated by $\ov{\mc{O}(k)}, \ov{\T_{0,n}} + \ov{\T_{\infty,n}}$ is analogous to the sub-algebra of the Ringel-Hall algebra studied by Kapranov in \cite{Kap2}. It is easily seen to be isomorphic to $\mathbb{U}(L \mathfrak{sl}^+_2)$, with $L \mathfrak{sl}^+_2 = (\mathfrak{h} \otimes t \mathbb{C}[t] ) \oplus (e \otimes \mathbb{C}[t,t^{-1}])$, where
\[
\mathfrak{h} = \mathbb{C} \cdot  \left[ \begin{matrix} 1 & 0 \\ 0 & -1 \end{matrix}\right].
\]

\newpage

\end{document}